\newcommand{\mcG}{\mathcal{G}}
\newcommand{\norm}[1]{\lVert#1\rVert}
\DeclareMathOperator{\Gal}{Gal}
\DeclareMathOperator{\divi}{div}
\DeclareMathOperator{\sat}{sat}
\DeclareMathOperator{\tors}{tors}
\DeclareMathOperator{\End}{End}
\DeclareMathOperator{\rk}{rk}
\newtheorem{theorem}[]{Theorem}[section]
\newtheorem{conjecture}[theorem]{Conjecture}
\newtheorem{corollary}[theorem]{Corollary}
\newtheorem{proposition}[theorem]{Proposition}
\newtheorem{lemma}[theorem]{Lemma}
\newtheorem*{definition}{Definition}
\newtheorem{remark}[theorem]{Remark}
\title{Fields Generated by Finite Rank Subgroups of $\overline{\mathbb{Q}}^*$}
\author{Lukas Pottmeyer}
\address{Fakult\"at f\"ur Mathematik, Universit\"at Duisburg-Essen, 45117 Germany}
\email{lukas.pottmeyer@uni-due.de}
\date{\today}
\begin{document}

\begin{abstract}
Let $\Gamma$ be a finite rank subgroup of $\overline{\mathbb{Q}}^*$. We prove that the multiplicative group of the field generated by all elements in the divisible hull of $\Gamma$, is free abelian modulo this divisible hull. This proves that a necessary condition for R\'emond's generalized Lehmer conjecture is satisfied.    
\end{abstract}

\subjclass[2010]{11G50, 20K15}
\keywords{Heights, free abelian groups}
\maketitle

\section{Introduction}

We fix once and for all an algebraic closure $\overline{\mathbb{Q}}$ of the rational numbers and assume that all algebraic extensions of $\mathbb{Q}$ are contained in this closure. The absolute logarithmic Weil-height $h$ on $\overline{\mathbb{Q}}$ can be defined as follows: For $\alpha_1\in\overline{\mathbb{Q}}$ let $f(x)=a_d (x-\alpha_1)\cdot\ldots\cdot(x-\alpha_d)\in\mathbb{Z}[x]$ be irreducible, then
\[
h(\alpha_1)=\frac{1}{d}\log\left( \vert a_d \vert \cdot \prod_{i=1}^d \max\{1,\vert \alpha_i\vert \}\right).
\]
For all upcoming information on height functions, we refer to \cite{BG}. This function $h$ satisfies $h(\alpha^n)=n h(\alpha)$ for all $\alpha \in \overline{\mathbb{Q}}^*$ and all $n\in\mathbb{N}=\{1,2,3,\ldots\}$, and vanishes precisely at $0$ and roots of unity. This makes it very easy to construct algebraic numbers of arbitrarily small positive height, since for any $\alpha \in \overline{\mathbb{Q}}^*$ which is not a root of unity, we have  
\begin{equation}\label{eq:tendtozero}
0 < h(\alpha^{\nicefrac{1}{n}})=\frac{1}{n}h(\alpha) \longrightarrow 0,
\end{equation}
as $n$ tends to infinity.

Let us denote the set of roots of unity by $\mu$. The famous Lehmer Conjecture, which has its origin in \cite{Le33}, postulates the existence of a constant $c>0$ such that $h(\alpha)\geq \frac{c}{[\mathbb{Q}(\alpha):\mathbb{Q}]}$ for all $\alpha\in\overline{\mathbb{Q}}^*\setminus \mu$. Consider the field
\[
\mathbb{Q}(\mu,\alpha, \alpha^{\nicefrac{1}{2}}, \alpha^{\nicefrac{1}{3}},\ldots)
\]
generated by all $n$th roots, $n=1,2,3,\ldots$, of an $\alpha \in \overline{\mathbb{Q}}^*$. This is, the field is generated by algebraic numbers, whose height is small for obvious reasons. But does this field also contain non-obvious algebraic numbers of small height; i.e. elements of small height not in $\mu\cup \{\alpha^{q} \vert q\in\mathbb{Q}\}$? A generalized version of the Lehmer conjecture, due to Ga\"el R\'emond, predicts a negative answer to this question, as we will explain in a moment.

 

Although our main result only deals with the multiplicative group $\overline{\mathbb{Q}}^*$, we will present this conjecture in greater generality. This is due to the fact that some parts of the paper are valid in the following general setting. 

Let $\mcG =A\times \mathbb{G}_m^N$ for some $N\in\mathbb{N}_0$ and an abelian variety $A$ defined over a number field $K$ equipped with an ample symmetric line bundle $\mathcal{L}$. The choice of this line bundle defines a N\'eron-Tate height $h_{\mathcal{L}}$ on $A(\overline{\mathbb{Q}})$. The canonical height $\widehat{h}_{\mcG}$ on $\mcG(\overline{\mathbb{Q}})$ is given as the sum of the N\'eron-Tate height and the Weil-height on each component. This means, for $(P,\alpha_1,\ldots,\alpha_N)\in\mcG(\overline{\mathbb{Q}})$ we set
\[
\widehat{h}_{\mcG}(P,\alpha_1,\ldots,\alpha_N)=h_{\mathcal{L}}(P)+\sum_{i=1}^N h(\alpha_i).
\]

If $G$ is any divisible group with a subgroup $\Gamma$, then we define the \emph{divisible hull} of $\Gamma$ to be the group 
\[
\Gamma_{\divi}:=\{\gamma \in G \vert n\gamma \in \Gamma \text{ for some } n \in \mathbb{N}\}.
\]
If $\Gamma$ is a subgroup of $\mcG(\overline{\mathbb{Q}})$, then we denote by $\End(\mcG)\cdot \Gamma$ the subgroup of $\mcG(\overline{\mathbb{Q}})$ generated by all elements of the form $\varphi(\gamma)$ with $\varphi\in\End(\mcG)=\End_{\overline{\mathbb{Q}}}(\mcG)$ and $\gamma\in\Gamma$.
Moreover we define
\[
\Gamma_{\sat}:= (\End(\mcG)\cdot \Gamma)_{\divi}.
\] 
Note that $\End(\mcG)\cdot \Gamma = \Gamma$ if $\End(\mcG)=\mathbb{Z}$. In this case there is no difference between $\Gamma_{\divi}$ and $\Gamma_{\sat}$. 
For any abelian group $G$ the \emph{rank} of $G$ is given by the maximal number of linearly independent elements in $\Gamma$. This rank will be denoted with $\rk(\Gamma)$. It follows that if $\Gamma$ as above has rank zero then $\Gamma_{\sat}=\Gamma_{\divi}$ is precisely given by the torsion subgroup $\mcG_{\tors}$. 

For any subgroup $\Gamma \subseteq \mcG(\overline{\mathbb{Q}})$ an element $\alpha \in \mcG(\overline{\mathbb{Q}})$ is called $\Gamma$-transversal, if it is contained in some translate $\gamma + B$, where $\gamma\in\Gamma_{\sat}$ and $B$ is a connected proper algebraic subgroup of $\mcG$. Now we can finally formulate R\'emond's generalized Lehmer conjecture \cite[Conjecture 3.4]{Re11}.

\begin{conjecture}[R\'emond]\label{conj}
Let $\mcG$ be either a torus or an abelian variety defined over a number field $K$, and let $\Gamma$ be a finite rank subgroup of $\mcG(\overline{\mathbb{Q}})$.
\begin{itemize}
\item[(a)] There exists a positive constant $c$ such that 
\[
\widehat{h}_{\mcG}(\alpha)\geq \frac{c}{[K(\Gamma_{\sat})(\alpha):K(\Gamma_{\sat})]^{\nicefrac{1}{\dim(\mcG)}}} \quad \forall ~  \alpha \in \mcG(\overline{\mathbb{Q}}) \text{ which are not } \Gamma\text{-transversal}.
\]
\item[(b)] For any $\varepsilon>0$ there is a positive constant $c_{\varepsilon}$ such that 
\[
\widehat{h}_{\mcG}(\alpha)\geq \frac{c_{\varepsilon}}{[K(\Gamma_{\sat})(\alpha):K(\Gamma_{\sat})]^{\nicefrac{1}{\dim(\mcG)}+\varepsilon}} \quad \forall ~  \alpha \in \mcG(\overline{\mathbb{Q}}) \text{ which are not } \Gamma\text{-transversal}.
\]
\item[(c)] For all finite extensions $L/K(\Gamma_{\sat})$ there is a positive constant $c_L$ such that 
\[
\widehat{h}_{\mcG}(\alpha)\geq c_L \quad \forall ~ \alpha \in \mcG(L)\setminus \Gamma_{\sat}.
\]
\end{itemize}
\end{conjecture}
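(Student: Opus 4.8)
The final statement is R\'emond's conjecture, and in the stated generality it is open: its special case $\mcG=\mathbb{G}_m$ with $\Gamma=\{1\}$ is precisely the classical Lehmer conjecture. So no monolithic argument is available, and the realistic objective is to organise the logical relations among (a), (b), (c), to isolate the single arithmetic input on which everything hinges, and to recover the cases that lie within reach of current technology. I would proceed in three stages.

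\emph{Stage 1: the logical skeleton.} First I would prove $\textnormal{(a)}\Rightarrow\textnormal{(b)}\Rightarrow\textnormal{(c)}$. The implication $\textnormal{(a)}\Rightarrow\textnormal{(b)}$ is formal, since $[K(\Gamma_{\sat})(\alpha):K(\Gamma_{\sat})]\geq 1$ makes the $\varepsilon$-exponent only weaken the bound, so one may take $c_\varepsilon=c$. For $\textnormal{(b)}\Rightarrow\textnormal{(c)}$ one fixes $L/K(\Gamma_{\sat})$ of degree $D$: a non-$\Gamma$-transversal $\alpha\in\mcG(L)\setminus\Gamma_{\sat}$ has relative degree $\leq D$, so (b) gives $\widehat h_{\mcG}(\alpha)\geq c_\varepsilon D^{-1/\dim(\mcG)-\varepsilon}$; a $\Gamma$-transversal $\alpha$ must be handled by descent along a quotient $\mcG\to\mcG/B$ through the proper connected subgroup $B$ with $\alpha-\gamma\in B$, using that $\mcG/B$ is again a torus times an abelian variety, that the image of $\Gamma$ has finite rank, and Northcott finiteness on the fiber over the image of $\alpha$. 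The delicate point is that the translating element $\gamma\in\Gamma_{\sat}$ need not lie in $L$, so one must control the field generated during the descent; the structural result announced in the abstract --- that the multiplicative group of $K(\Gamma_{\sat})$ is free abelian modulo $\Gamma_{\divi}$ --- is exactly the sort of control that rules out pathological behaviour of these fields and shows the conjecture to be at least self-consistent.

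\emph{Stage 2: the height input.} The substance of (a)/(b) would follow the Amoroso--Dvornicich and Amoroso--Zannier strategy. One picks a large auxiliary prime $p$ (or a finite set of primes of good reduction of bounded norm) and a place $v$ of $K(\Gamma_{\sat})(\alpha)$ above it; one exploits that the Galois action over $K(\Gamma_{\sat})$ is close enough to abelian --- it is solvable of bounded derived length once $\rk(\Gamma)<\infty$, the Kummer part of the tower being abelian of bounded rank --- to produce, for many $p$, an automorphism $\sigma$ fixing $K(\Gamma_{\sat})$ with $\sigma(\alpha)\equiv\alpha^{p}$ (respectively $[p]\alpha$) modulo $v$; the hypothesis that $\alpha$ is \emph{not} $\Gamma$-transversal guarantees that the $\sigma$-conjugates are not trapped in a proper subgroup coset, hence are numerous and spread out. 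Feeding these congruences into an interpolation-determinant estimate (a Siegel-lemma construction together with a $v$-adic and archimedean size comparison) then forces a lower bound on $\widehat h_{\mcG}(\alpha)$ of the predicted shape.

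\emph{Stage 3: the obstacle.} The one step I do not expect to complete in general is the interpolation estimate in the regime where $\alpha$ has \emph{small} relative degree: this is exactly the barrier of the classical Lehmer conjecture, and no available method --- transcendence via auxiliary functions, arithmetic equidistribution, or the Amoroso--David descent --- crosses it; at best one gets a Dobrowolski-type bound with an extra $\bigl(\log\log/\log\bigr)^{\kappa}$ factor, which yields (b) only with that logarithmic loss and does not yield (a). I would therefore present (a)/(b) in full only where a uniform lower bound over the relevant field is already in the literature --- $\mcG=\mathbb{G}_m$ with $K(\Gamma_{\sat})$ inside a suitable Kummer extension of $\mathbb{Q}^{\mathrm{ab}}$, and abelian varieties with complex multiplication over $\mathbb{Q}^{\mathrm{ab}}$ --- and for a general $\mcG$ record the reductions of Stage 1 together with the Dobrowolski-strength partial bound of Stage 2, leaving the full strength of (a) open precisely to the extent that Lehmer's conjecture is.
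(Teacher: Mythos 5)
The statement you were asked about is a conjecture (R\'emond's generalized Lehmer conjecture), and the paper neither proves it nor claims to: it is recorded as motivation, and the paper's actual contribution is to verify a \emph{necessary condition} for part (c) in the case $\mcG=\mathbb{G}_m$, namely that $\nicefrac{\mcG(L)}{\Gamma_{\sat}}$ is free abelian (Theorem \ref{thm:freeab}), proved via R\'emond's norm lemma, the Lawrence--Zorzitto--Stepr\=ans characterization of free abelian groups by discrete norms, Pontryagin's criterion, the Bays--Hart--Pillay result over $K(\mcG_{\tors})$, and elementary Kummer theory. So your decision not to attempt a full proof is the correct call; there is no proof in the paper to match, and your Stage~3 assessment of the state of the art (the Lehmer barrier, Dobrowolski-type losses, the rank-zero results of Delsinne and Carrizosa and Amoroso's specific positive-rank case) is broadly accurate, with the caveat that the known (b)-type results hold only when $\Gamma_{\sat}=\mcG_{\tors}$.

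Two substantive corrections to your outline, though. First, the implication (b) $\Rightarrow$ (c) is not as formal as your Stage~1 suggests: it is immediate only when $\dim(\mcG)=1$, where $\Gamma$-transversality is \emph{equivalent} to membership in $\Gamma_{\sat}$, so there is no transversal case left to handle; for general $\mcG$ the paper attributes the implication to the strong result \cite[Theorem 3.7]{Re11}, and your sketched descent through $\mcG\to\mcG/B$ with Northcott on fibers is essentially the hard content of that theorem --- in particular the bound must be uniform over all proper connected $B$ and all translates $\gamma+B$ with $\gamma\in\Gamma_{\sat}$, which a fiberwise Northcott argument does not give. Second, you invert the logical role of the freeness statement from the abstract: in the paper, freeness of $\nicefrac{\mcG(L)}{\Gamma_{\sat}}$ is a \emph{consequence} of Conjecture \ref{conj} (c) (because (c) says exactly that $\norm{\cdot}_{h,\Gamma_{\sat}}$ is a discrete norm, and groups with a discrete norm are free), so Theorem \ref{thm:freeab} is evidence that a necessary condition holds; it is not an input providing ``control of the fields generated during the descent'' for (b) $\Rightarrow$ (c), and it plays no role in any known attack on (a) or (b).
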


Conjecture \ref{conj} is weaker than R\'emond's original conjecture in two ways. Firstly, \cite[Conjecture 3.4]{Re11} predicts lower bounds for the height of subvarieties of $\mcG$, not just for the height of points. Secondly, the exponent on the right hand side of (a) and (b) is smaller than our exponents $\frac{1}{\dim(\mcG)}$, resp. $\frac{1}{\dim(\mcG)}+\varepsilon$. Since the focus of this paper lies solely on part (c), we did not introduce the necessary notation to present the exponents conjectured by R\'emond. On the other hand, this conjecture could be generalized to cover also semi-abelian varieties of the form $A\times \mathbb{G}_m^N$. For part (c) this generalization of the conjecture can be found in \cite[Conjecture 1.2]{Pl19}.

Obviously, part (a) of Conjecture \ref{conj} implies part (b). It is also true that part (b) implies part (c). If $\dim(\mcG)=1$ this follows since in this case $\alpha$ is $\Gamma$-transversal if and only if $\alpha \in \Gamma_{\sat}$. For general $\mcG$ the implication (b) $\Rightarrow$ (c) follows as a very special case from the strong result \cite[Theorem 3.7]{Re11}.


There are some results on this conjecture in the case that the rank of $\Gamma$ is zero. Delsinne \cite{Del09} proved Conjecture \ref{conj} (b) in the case that $\mcG=\mathbb{G}^N$ and $\Gamma_{\sat}=\mcG_{\tors}$. Also under the assumption $\Gamma_{\sat}=\mcG_{\tors}$, Carrizosa \cite{Ca09} proved Conjecture \ref{conj} (b) in the case that $\mcG$ is an abelian variety with complex multiplication. (Previously, Delsinne's result for $N=1$ has been proven in \cite{AZ00}, and part (c) for $\mcG$ an abelian variety with complex multiplication and $\Gamma_{\sat}=\mcG_{\tors}$ has been proven in \cite{BS04}).  

\smallskip

Amoroso \cite{Am14} could achieve the following result towards the seemingly most easiest case of a group of positive rank: Let $\Gamma=\langle 2 \rangle$ be the subgroup of $\mcG=\mathbb{G}_m=\overline{\mathbb{Q}}^*$ generated by $2$, and define $\frac{1}{3^{\infty}}\Gamma=\{\alpha \in \overline{\mathbb{Q}}^* \vert \alpha^{3^n} \in \Gamma\}$. Then there is an effectively computable constant $c>0$ such that $h(\alpha)\geq c$ for all $\alpha \in \mathbb{Q}(\frac{1}{3^{\infty}}\Gamma)^* \setminus \frac{1}{3^{\infty}}\Gamma$. Under certain technical restrictions, a similar result for groups $\mcG=A\times \mathbb{G}_m$ and $\Gamma=\{0\} \times \langle b \rangle$, where $A$ is an elliptic curve and $b$ is an integer, has been announced in \cite{Pl19}.

\smallskip

We will give some weak group theoretic support for the validity of Conjecture \ref{conj} (c). The connection to group theory is given by the following observation, (cf. Lemma \ref{freemodgamma} below).

\begin{proposition}
If Conjecture \ref{conj} (c) is true, then $\nicefrac{\mcG(L)}{\Gamma_{\rm sat}}$ is free abelian, where $L$ is a finite extension of $K(\Gamma_{\rm sat})$.
\end{proposition}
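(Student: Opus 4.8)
The plan is to equip $Q:=\mcG(L)/\Gamma_{\sat}$ --- which is countable because $\mcG(\overline{\mathbb Q})$ is --- with a height-type norm and then invoke Pontryagin's criterion for freeness. For the norm, on $\mcG(\overline{\mathbb Q})=A(\overline{\mathbb Q})\times(\overline{\mathbb Q}^{*})^{N}$ I would set
\[
\lambda(P,\alpha_{1},\dots,\alpha_{N}):=h_{\mathcal L}(P)^{1/2}+\sum_{i=1}^{N}h(\alpha_{i}).
\]
Since $h_{\mathcal L}$ is a positive semidefinite quadratic form (so $h_{\mathcal L}^{1/2}$ is subadditive, by Cauchy--Schwarz) and $h$ is subadditive with $h(\alpha^{n})=|n|\,h(\alpha)$, this $\lambda$ is subadditive, satisfies $\lambda(nx)=|n|\,\lambda(x)$, and vanishes precisely on $\mcG_{\tors}$. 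The sole use of Conjecture \ref{conj}(c) (or, for $N\ge1$, of its analogue \cite[Conjecture~1.2]{Pl19}) is then a constant $\varepsilon_{L}>0$ with $\lambda(x)\ge\varepsilon_{L}$ for all $x\in\mcG(L)\setminus\Gamma_{\sat}$: indeed $\widehat h_{\mcG}(x)\ge c_{L}$ makes one of the $N+1$ summands $h_{\mathcal L}(P),h(\alpha_{1}),\dots,h(\alpha_{N})$ at least $c_{L}/(N+1)$. (If $\mcG$ is a torus, one may simply take $\lambda=\widehat h_{\mcG}$.)

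Next I would push $\lambda$ down to $Q$: for $\bar x\in Q$ with an arbitrary lift $x\in\mcG(L)$ put
\[
\overline\lambda(\bar x):=\inf_{\gamma\in\Gamma_{\sat}}\lambda(x+\gamma).
\]
This is independent of the lift and subadditive; it is homogeneous, $\overline\lambda(n\bar x)=|n|\,\overline\lambda(\bar x)$, where the nontrivial inequality ``$\ge$'' is exactly the place where it matters that $\Gamma_{\sat}$ is \emph{divisible}, so that every $\gamma'\in\Gamma_{\sat}$ can be written $n\gamma$ with $\gamma\in\Gamma_{\sat}$. Finally, if $\bar x\ne0$ then each $x+\gamma$ lies in $\mcG(L)\setminus\Gamma_{\sat}$, so $\overline\lambda(\bar x)\ge\varepsilon_{L}$, whereas $\overline\lambda(0)=0$. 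Thus $\overline\lambda$ is a genuine group norm on $Q$ that is bounded below by $\varepsilon_{L}$ on $Q\setminus\{0\}$; in particular $Q$ is torsion-free.

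Now apply Pontryagin's criterion: a countable torsion-free abelian group is free if and only if each of its subgroups of finite rank is finitely generated. So let $\bar H\le Q$ have finite rank $r$. I would restrict $\overline\lambda$ to $\bar H$ and extend it, by homogeneity and uniform continuity, to a continuous seminorm $\overline\lambda_{\mathbb R}$ on the finite-dimensional space $\bar H\otimes_{\mathbb Z}\mathbb R\cong\mathbb R^{r}$; then, with $Z:=\ker\overline\lambda_{\mathbb R}$, pass to the induced genuine norm on $(\bar H\otimes\mathbb R)/Z$. The composite $\bar H\hookrightarrow\bar H\otimes\mathbb R\twoheadrightarrow(\bar H\otimes\mathbb R)/Z$ sends each nonzero $\bar x$ to an element of norm $\overline\lambda(\bar x)\ge\varepsilon_{L}$, so it is injective with $\varepsilon_{L}$-separated image; an $\varepsilon_{L}$-separated subgroup of a finite-dimensional normed space is discrete, hence a finitely generated free abelian group, so $\bar H$ is finitely generated. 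Pontryagin's criterion then gives that $Q$ is free abelian.

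The part I expect to be the genuine obstacle is this last reduction. One cannot argue one element at a time: a torsion-free group of rank $\ge2$ can fail to be finitely generated even though every single element is divisible by only finitely many integers, so each finite-rank subgroup has to be controlled as a whole, which is what the embedding into $\mathbb R^{r}$ does. Apart from Conjecture \ref{conj}(c) itself, the only spot where a hypothesis on $\mcG$ and $\Gamma$ is genuinely needed is the homogeneity of $\overline\lambda$, and there all that is used is that $\Gamma_{\sat}$ is divisible; Pontryagin's criterion is classical.
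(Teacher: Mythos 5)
Your proof is correct. The first half is the paper's own construction: the norm $\lambda=\sqrt{h_{\mathcal L}}+\sum h(\alpha_i)$ is exactly the paper's $\norm{\cdot}_h$, and your quotient norm $\overline\lambda$ is the paper's $\norm{\cdot}_{h,\Gamma_{\sat}}$. Your observation that Conjecture \ref{conj}~(c) forces $\overline\lambda\geq\varepsilon_L$ off zero is the paper's remark that this norm is discrete if and only if (c) holds; and you are right that for \emph{this} implication the Dirichlet--approximation argument of Lemma \ref{Bogomolov} is not needed, since $\Gamma_{\sat}$ is divisible and contained in $\mcG(L)$, so every element $x+\gamma$ over which the infimum is taken already satisfies the conjectural lower bound (the paper needs Lemma \ref{Bogomolov} only to establish positivity of $\norm{\cdot}_{h,\Gamma}$ unconditionally, in Lemma \ref{heightisnorm}). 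Where you genuinely diverge is the final step: the paper simply cites the Lawrence--Zorzitto--Stepr\=ans theorem (Proposition \ref{LSZ}: an abelian group is free iff it carries a discrete norm), whereas you reprove its countable case from scratch --- extend $\overline\lambda$ by $\mathbb{Q}$-homogeneity and Lipschitz continuity to a seminorm on $\bar H\otimes\mathbb{R}$, quotient by its kernel, note that the image of a finite-rank subgroup is $\varepsilon_L$-separated and hence a lattice, and conclude by Pontryagin's criterion. This is in substance Lawrence's and Zorzitto's proof of Proposition \ref{LSZ}, so it is sound; it buys self-containedness and uses only the countability of $\mcG(\overline{\mathbb{Q}})$, while the citation buys brevity and covers uncountable groups. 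The compressed steps (well-definedness of the rational-homogeneous extension, which needs $Q$ torsion-free; the identity $p(x+z)=p(x)$ for $z$ in the kernel of a seminorm; discreteness of separated subgroups of $\mathbb{R}^r$) are all standard and check out.
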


Hence, the following theorem shows that a necessary condition for the truth of Conjecture \ref{conj} (c) is fulfilled. 

\begin{theorem}\label{thm:freeab}
Let $\mcG = \mathbb{G}_m$, and let $\Gamma$ be a subgroup of $\mcG(\overline{\mathbb{Q}})$ of finite rank. Then the group $\nicefrac{\mcG(L)}{\Gamma_{\sat}}$ is free abelian for all finite extensions $L/\mathbb{Q}(\Gamma_{\sat})$.
\end{theorem}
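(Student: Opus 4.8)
The plan is to combine a structural criterion for freeness with Kummer theory over $\mathbb{Q}(\mu)$ and the absolute lower bound for the height on $\mathbb{Q}^{\mathrm{ab}}$. Since $\End(\mathbb{G}_m)=\mathbb{Z}$ we have $\Gamma_{\sat}=\Gamma_{\divi}$, and since $\mu\subseteq\Gamma_{\divi}\subseteq L^*$ the group $L^*/\Gamma_{\divi}$ is torsion-free and countable. By Pontryagin's criterion it is therefore free as soon as every subgroup of finite rank is finitely generated. Replacing such a subgroup by its divisible hull in $L^*/\Gamma_{\divi}$ (same rank), it has the form $\Gamma''/\Gamma_{\divi}$ with $\Gamma''\subseteq L^*$ saturated, of finite rank, and containing $\Gamma_{\divi}$; write $\Gamma''=\{x\in L^*:x^n\in\langle\gamma_1,\dots,\gamma_r,\delta_1,\dots,\delta_s\rangle\text{ for some }n\}$, where $\gamma_1,\dots,\gamma_r$ generate a full-rank free subgroup $\Gamma_0\subseteq\Gamma$ with $(\Gamma_0)_{\divi}=\Gamma_{\divi}$, and $\delta_1,\dots,\delta_s\in\Gamma''$ extend them to a maximal independent set, all lying in a fixed number field $k$. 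Using that $\mathbb{Q}(\Gamma_{\divi})$ already contains every $\gamma_i^{1/m}$, the exponent map identifies $\Gamma''/\Gamma_{\divi}$ with a subgroup of $\mathbb{Q}^s$ containing $\mathbb{Z}^s=\langle\Gamma_{\divi},\delta_1,\dots,\delta_s\rangle/\Gamma_{\divi}$, so $\Gamma''/\Gamma_{\divi}$ is finitely generated precisely when the torsion group $\Gamma''/\langle\Gamma_{\divi},\delta_1,\dots,\delta_s\rangle\hookrightarrow(\mathbb{Q}/\mathbb{Z})^s$ has bounded exponent. Thus everything reduces to a bound on the integers $n$ for which some element of $L^*$ lying genuinely outside $\Gamma_{\divi}$ is an $n$-th power modulo roots of unity.

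To prove this boundedness I would first enlarge $L$ to $L(\delta_1,\dots,\delta_s)$ (harmless) and run a short Kummer-theoretic manipulation: since $\mu\subseteq\mathbb{Q}(\Gamma_{\divi})$ and $[L:\mathbb{Q}(\Gamma_{\divi})]$ bounds the degrees of all radicals of $\delta_j$ that lie in $L$, the problem descends to the statement that for $\delta\in\mathbb{Q}(\Gamma_{\divi})^*\setminus\Gamma_{\divi}$ the set of $n$ with $\delta\in\mu\cdot(\mathbb{Q}(\Gamma_{\divi})^*)^n$ is finite. Now $\mathbb{Q}(\Gamma_{\divi})=\mathbb{Q}(\mu)\bigl(\gamma_i^{1/m}:i,m\bigr)$ is an abelian Kummer extension of $\mathbb{Q}(\mu)$, so $\delta^{1/n}\in\mathbb{Q}(\Gamma_{\divi})$ forces, again by Kummer theory, a relation $\delta^{n_0}\gamma_1^{-a_1}\cdots\gamma_r^{-a_r}\in(\mathbb{Q}(\mu)^*)^n$ with $n_0\mid n$, the ratio $n/n_0$ large, and integers $a_i$. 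Two inputs then close the loop. First, the absolute height on $\mathbb{Q}^{\mathrm{ab}}=\mathbb{Q}(\mu)$ is bounded below away from $0$ on non-torsion elements (Amoroso--Dvornicich), so either $\delta^{n_0}\gamma_1^{-a_1}\cdots\gamma_r^{-a_r}$ is a root of unity --- whence $\delta\in\Gamma_{\divi}$, a contradiction --- or $\max_i|a_i|$ must grow essentially linearly in $n$. Second, comparing valuations at the finitely many rational primes dividing some $\gamma_i$, and using that adjoining the radicals $\gamma_i^{1/m}$ adds no prime-to-$\ell$ ramification to $\mathbb{Q}(\mu)$ at primes $\ell$ with $v_\ell(\gamma_i)=0$ for all $i$, pins the $a_i$ down modulo $n$ and controls the $\ell$-parts of $n$. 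Combining these estimates bounds $n$; the remaining ``unit-like'' directions (where $\delta$ is a global unit and no valuation is available) are handled by a further application of the height gap after dividing out the best approximation coming from $\langle\gamma_i\rangle\cdot\mu$.

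The serious obstacle is this last part: the ramification/value-group control, i.e. the lemma that over the deeply ramified field $\mathbb{Q}(\mu)$ the radicals of $\gamma_i$ (which are $\ell$-adic units at the primes $\ell$ not ``involved in $\Gamma$'') generate no tame ramification there, together with the bookkeeping required to make the height bound and the valuation bound cover every $n$ simultaneously. The rest --- the Pontryagin reduction, the enlargement to $L(\delta_1,\dots,\delta_s)$, and the descent from $L$ to $\mathbb{Q}(\Gamma_{\divi})$ and then to $\mathbb{Q}(\mu)$ --- is formal, and the case $\rk(\Gamma)=0$ is exactly the statement that a finite extension of $\mathbb{Q}^{\mathrm{ab}}$ has the Bogomolov property fed into the same machine.
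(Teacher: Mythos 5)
Your opening reduction is sound and in fact slicker than the paper's in the $\mathbb{G}_m$ case: $L^*/\Gamma_{\divi}$ is countable and torsion-free, so Pontryagin's criterion reduces everything to showing that every finite-rank subgroup is finitely generated, i.e.\ to a bounded-divisibility statement for $\langle\Gamma_{\divi},\delta_1,\dots,\delta_s\rangle$ inside $L^*$. The genuine gap is that this bounded-divisibility statement --- the entire content of the theorem --- is never proved. You propose to obtain it from Kummer theory over $\mathbb{Q}(\mu)$ combined with the Amoroso--Dvornicich height gap on $\mathbb{Q}^{\mathrm{ab}}$ and a valuation/ramification analysis, and you yourself flag the ramification control and the accompanying bookkeeping as unresolved. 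As written the height argument does not close the loop either: from $\delta^{n_0}\gamma_1^{-a_1}\cdots\gamma_r^{-a_r}\in(\mathbb{Q}(\mu)^*)^n$ one only gets $nc\le n_0h(\delta)+\sum_i\vert a_i\vert h(\gamma_i)$, and since $n_0$ may be comparable to $n$ (you assert, but do not justify, that $n/n_0$ is large) this need not force the $a_i$ to grow; and when $\delta$ and the $\gamma_i$ are units the valuation argument gives nothing, which is exactly the case you defer. There is a further non-formal step you gloss over: knowing for each fixed $\delta$ that $\{n:\delta\in\mu\cdot(\mathbb{Q}(\Gamma_{\divi})^*)^n\}$ is finite only says each element of your torsion subgroup of $(\mathbb{Q}/\mathbb{Z})^s$ has finite order, which is automatic; finiteness of that subgroup requires a uniform bound, i.e.\ the full Bashmakov--Ribet-type d\'evissage rather than the single-element statement.

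The paper's proof avoids all of this machinery. Its key step (Proposition 4.1) is purely field-theoretic: for $E$ a finite extension of $K(\mu,\gamma_1,\dots,\gamma_r)$ inside $L=K(\Gamma_{\sat})$, any $\alpha\in L^*$ with $\alpha^n\in E^*$ lies in a tower of cyclic Kummer extensions $E_i=E(\gamma_1^{\nicefrac{1}{m_1}},\dots,\gamma_i^{\nicefrac{1}{m_i}})$; since every intermediate field of $E_{i-1}(\gamma_i^{\nicefrac{1}{m_i}})/E_{i-1}$ is generated by a power of the radical, comparing the Galois action on $\alpha$ (a character, because $\alpha^n\in E$) with its action on $\gamma_i^{\nicefrac{1}{m_i}}$ shows that $\alpha$ differs from a radical of $\gamma_i$ by an element of $E_{i-1}$, and induction yields $\alpha\in E^*\Gamma_{\sat}$. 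This torsion-freeness of $L^*/E^*\Gamma_{\sat}$, combined via the Pontryagin-type criterion of Lemma 3.1 with the freeness of $E^*/(E^*\cap\Gamma_{\sat})$ (Iwasawa/Bays--Hart--Pillay plus R\'emond's Dirichlet-approximation lemma), proves the theorem with no appeal to Amoroso--Dvornicich and no ramification analysis. If you want to salvage your route, this cyclic-extension argument is precisely the missing lemma: it delivers the uniform divisibility bound your sketch leaves open.
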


If the rank of $\Gamma$ is zero (i.e. if $\Gamma_{\sat}=\mcG_{\tors}$), then the statement of Theorem \ref{thm:freeab} is true for all semi-abelian varieties of the form $A\times \mathbb{G}_m^N$. This was proved by Bays, Hart and Pillay in the appendix of \cite{BHP}. We use their result to prove that the group $\nicefrac{\mcG(L)}{\Gamma_{\sat}}$ is free abelian if the torsion group of
\begin{equation}\label{eq:gr}
\nicefrac{\mcG(L)}{\mcG(K(\mcG_{\tors}))+\Gamma_{\sat}}
\end{equation}
has finite exponent. This reduction step is true in the general case $\mcG=A\times \mathbb{G}_m^N$, and follows from a classification result for free abelian groups due to Pontryagin.

In the last section we reduce to the case $\mcG=\mathbb{G}_m$ and prove Theorem \ref{thm:freeab}, by showing that the group from \eqref{eq:gr} is actually torsion free. This follows quite elementary by basic facts on cyclic field extensions.

%

\section{Some group theory and R\'emond's Lemma}

All abelian groups in this section will be written additively. 

\begin{definition}\label{def:norm}
A \emph{norm} on an abelian group $G$ is a function $\norm{\cdot}: G \longrightarrow \mathbb{R}_{\geq 0}$ satisfying
\begin{enumerate}[(i)]
\item $\norm{g} = 0 ~ \Longleftrightarrow ~ g = 0$ is the neutral element,
\item $\norm{ g + f} \leq \norm{ g } + \norm{ f }$ for all $g,f \in G$, and
\item $\norm{ n\cdot g } = \vert n \vert \cdot \norm{ g }$ for all $g\in G$ and all $n \in \mathbb{Z}$.
\end{enumerate}
If $\norm{\cdot}$ only satisfies (ii) and (iii), then it is called a \emph{semi-norm}. A norm $\norm{\cdot}$ is called \emph{discrete} on $G$ if and only if $0$ is not an accumulation point in the set $\{\norm{g} \vert g\in G\}$. 
\end{definition}

In this section $\mcG$ always refers to a semi-abelian variety of the form $A\times \mathbb{G}_m^N$, defined over a number field $K$, where $A$ is an abelian variety and $N\in\mathbb{N}_0$. One of the main properties of $\widehat{h}_{\mcG}$ is that it is well-defined on $\nicefrac{\mcG(\overline{\mathbb{Q}})}{\mcG_{\tors}}$, where $\mcG_{\tors}$ is the torsion subgroup of $\mcG$. Moreover, the map
\[
(P,\alpha_1,\ldots,\alpha_N) \mapsto \sqrt{h_{\mathcal{L}}(P)}+\sum_{i=1}^N h(\alpha_i)
\]
is a norm on $\nicefrac{\mcG(\overline{\mathbb{Q}})}{\mcG_{\tors}} = \nicefrac{A(\overline{\mathbb{Q}})\times\mathbb{G}_m^N(\overline{\mathbb{Q}})}{\mcG_{\tors}}$. We will denote this norm on $\nicefrac{\mcG(\overline{\mathbb{Q}})}{\mcG_{\tors}}$ by $\Vert \cdot \Vert_h$. 

Let $\norm{.}$ be a norm on a divisible group $G$ and let $\Gamma$ be a subgroup of $G$. Then we define the function 
\[
\norm{.}_{\Gamma}: \nicefrac{G}{\Gamma_{\divi}} \longrightarrow \mathbb{R}_{\geq0} \quad ; \quad \norm{[\alpha]}_{\Gamma}= \inf\{\norm{\alpha+\gamma} \vert \gamma \in \Gamma_{\divi}\} .
\]
Since this map is well-defined we will simply write $\norm{\alpha}_{\Gamma}$ for $\norm{[\alpha]}_{\Gamma}$.

The following lemma is along the lines of \cite[Lemma 3.5]{Re11}. The proof is also due to Ga\"el R\'emond, who presented a special case of it at the workshop on ``Heights in Diophantine geometry, group theory and additive combinatorics'' at the ESI in Vienna in November 2013. See also \cite{Gr17} for a quantitative version in the $G=\mathbb{G}_m$ case.

\begin{lemma}\label{Bogomolov}
Let $G$ be a divisible group such that there is a norm $\norm{.}$ on $\nicefrac{G}{G_{\tors}}$. Let $\Gamma$ and $H$ be subgroups of $G$ such that
\begin{itemize}
\item $\rk(\Gamma)=\rk(\Gamma \cap H)$ is finite, and
\item $\norm{g}\geq \kappa$ for all $g\in H\setminus \Gamma_{\divi}$, for some constant $\kappa >0$.
\end{itemize}
Then there exists a positive constant $c$ only depending on $H$ and $\Gamma$ such that $\norm{g}_{\Gamma} \geq c$ for all $g\in H\setminus \Gamma_{\divi}$.
\end{lemma}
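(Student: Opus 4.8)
The plan is to reduce the statement about the induced norm $\norm{.}_\Gamma$ on $\nicefrac{G}{\Gamma_{\divi}}$ to the hypothesis about the honest norm $\norm{.}$ on the finitely-generated-rank situation, essentially by projecting away the $\Gamma$-directions. First I would pass to the quotient $V := \nicefrac{G}{G_{\tors}} \otimes_\mathbb{Z} \mathbb{R}$, a real vector space in which $\norm{.}$ extends to a genuine norm (using properties (ii), (iii) of Definition \ref{def:norm}). Inside $V$, the image $\overline{\Gamma_{\divi}}$ of $\Gamma_{\divi}$ spans a finite-dimensional subspace $W$ of dimension $r = \rk(\Gamma)$, since $\Gamma_{\divi}$ has the same rank as $\Gamma$. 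Likewise let $\overline{H}$ be the image of $H$. Choose a linear projection $\pi\colon V \to V$ with kernel $W$ (a complement exists because $W$ is finite-dimensional), so that $\pi$ restricted to $\overline{H}$ is a linear map whose kernel is $\overline{H}\cap W$. The key point is that $\norm{g}_\Gamma$ is comparable to $\norm{\pi(\bar g)}$: indeed $\norm{g}_\Gamma = \inf_{\gamma\in\Gamma_{\divi}}\norm{\bar g + \bar\gamma} = \operatorname{dist}(\bar g, W)$ in the normed space $(V,\norm{.})$, and on the finite-dimensional space $\overline{H}$ (or rather its span together with $W$) distance-to-$W$ and $\norm{\pi(\cdot)}$ differ by bounded multiplicative constants, since all norms on a finite-dimensional real vector space are equivalent and $\pi$ is a fixed linear isomorphism onto its image modulo $W$. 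Wait — I must be careful: $\overline{H}$ need not be finite-dimensional. But $\overline{H} + W$ need not be either. The right move is to work instead inside the subspace spanned by $\overline{H}\cap W$ together with finitely many elements, or better: compare $\norm{g}_\Gamma$ directly with $\norm{\bar g}_{\overline{H}\cap W}$, the distance from $\bar g$ to the finite-dimensional space $\operatorname{span}(\overline{H}\cap W)$, and show this finite-dimensional subspace already realizes the infimum up to constants.

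**The core reduction.**
So the precise plan is: let $W_0 = \operatorname{span}_\mathbb{R}(\text{image of }\Gamma\cap H)$, a subspace of dimension $\leq r$; by the rank hypothesis $\rk(\Gamma)=\rk(\Gamma\cap H)$, this $W_0$ actually equals $\overline{H}\cap W$ and has dimension exactly $r$. Claim: there is a constant $c_1 > 0$ with $\norm{g}_\Gamma \geq c_1 \cdot \operatorname{dist}(\bar g, W_0)$ for all $g \in H$. Granting the claim, it remains to bound $\operatorname{dist}(\bar g, W_0)$ below for $g\in H\setminus\Gamma_{\divi}$. For this I would use the second hypothesis: $\norm{h}\geq\kappa$ for $h\in H\setminus\Gamma_{\divi}$. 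The subgroup $H\cap\Gamma_{\divi}$ of $H$ has rank $r$ and its divisible hull inside $\overline H$ is exactly $W_0\cap\overline H = W_0$; so an element $g\in H$ lies in $\Gamma_{\divi}$ iff some multiple $ng$ lies in $\Gamma\cap H$, which (after tensoring) happens iff $\bar g\in W_0$. Thus $H\setminus\Gamma_{\divi}$ maps into $\overline H\setminus W_0$. Now consider the discrete subgroup: the image of $H$ in the finite-dimensional quotient $V/\overline{(H\cap\Gamma_{\divi})}_{\mathbb{R}}$... here I would invoke that $\overline H$ modulo $W_0$, with the quotient seminorm, contains the image of $H$ as a \emph{discrete} subset bounded below by (a constant times) $\kappa$: this is precisely the content of combining the hypothesis $\norm{h}\geq\kappa$ with R\'emond-type arguments, and it is where the finite-rank condition is essential, because $H/(H\cap\Gamma_{\divi})$ is torsion-free of finite rank, hence embeds in a finite-dimensional $\mathbb{Q}$-vector space, and a subgroup on which a norm is bounded below is discrete, hence finitely generated, so its image spans only finitely many dimensions and the quotient norm stays bounded below on it away from $0$.

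**Assembling the constant.**
Putting the pieces together: on the finite-dimensional real vector space $U := \operatorname{span}_\mathbb{R}(\bar H) \subseteq V$ — which is finite-dimensional precisely because $H/(H\cap\Gamma_{\divi})$ has finite rank $\rk(H/(H\cap\Gamma_{\divi})) \leq \rk(H)$, and $H\cap\Gamma_{\divi}$ has rank $r$, so $\bar H$ spans a space of dimension $\rk(H) < \infty$ — all norms are equivalent, so $\operatorname{dist}(\bar g, W_0)$ computed with $\norm{.}$ is comparable to the one computed with any fixed Euclidean structure, and $\norm{g}_\Gamma = \operatorname{dist}_{\norm{.}}(\bar g, W)\geq \operatorname{dist}_{\norm{.}}(\bar g, W_0\cup(W\cap U))$; since within $U$ we have $W\cap U = W_0$, this gives $\norm{g}_\Gamma \gtrsim \operatorname{dist}(\bar g, W_0)$. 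Finally, the image of $H\setminus\Gamma_{\divi}$ in $U/W_0$ is a subset of a lattice-like discrete set bounded away from $0$ by $\gtrsim\kappa$; combining the two comparisons yields the desired $c > 0$, depending only on $\Gamma$, $H$, $\kappa$, and the (finitely many) chosen generators — i.e. only on $H$ and $\Gamma$.

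**Main obstacle.**
The delicate step is the one I flagged: showing the infimum defining $\norm{g}_\Gamma$ over \emph{all} of $\Gamma_{\divi}$ is comparable to the distance to the \emph{finite-dimensional} subspace $W_0$, i.e. that $\Gamma_{\divi}$-directions orthogonal to $\overline H$ cannot help reduce the norm of an element of $H$ by more than a bounded factor. In a general normed group this requires genuine care: the projection $\pi$ must be chosen to be bounded, which is automatic in finite dimensions but $V$ is infinite-dimensional. The honest fix is to never leave finite dimensions — replace $V$ by the finite-dimensional space $U + W = \operatorname{span}_\mathbb{R}(\bar H \cup \bar\Gamma)$ (finite-dimensional since both $\bar H$ and $\bar\Gamma$ span finite-dimensional subspaces, using $\rk(\Gamma)<\infty$ and the discreteness argument for $H$), and do all the norm-equivalence bookkeeping there. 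I expect that setting up this finite-dimensional ambient space cleanly, and verifying that $\Gamma_{\divi}\cap(U+W)$ has divisible hull equal to $W$, is the real technical heart; everything else is linear algebra and the elementary fact that a norm bounded below on a finite-rank torsion-free group forces discreteness.
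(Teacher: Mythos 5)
Your proposal correctly identifies that the problem reduces to bounding $\inf_{\underline b\in\mathbb{Q}^r}\norm{g+b_1\gamma_1+\dots+b_r\gamma_r}$ from below, where $\gamma_1,\dots,\gamma_r$ are independent elements of $\Gamma\cap H$ (this is where the hypothesis $\rk(\Gamma)=\rk(\Gamma\cap H)$ enters; your reduction to the span of $\Gamma_{\divi}$ amounts to the same thing). But the proposal stops exactly where the actual proof begins. The crucial difficulty is not the one you flag (comparing the infimum over $\Gamma_{\divi}$ with a distance to a finite-dimensional subspace --- that part is harmless); it is that the hypothesis $\norm{\cdot}\geq\kappa$ applies only to elements \emph{of $H$}, whereas $g+\sum b_i\gamma_i$ with $b_i\in\mathbb{Q}$ generally does not lie in $H$. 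To use the hypothesis one must clear denominators: if $b_i\in\frac{1}{m}\mathbb{Z}$ then $m(g+\sum b_i\gamma_i)\in H\setminus\Gamma_{\divi}$, which yields only $\norm{g+\sum b_i\gamma_i}\geq\kappa/m$, a bound that degenerates as $m$ grows. The entire content of the lemma is the balancing act between the size of the denominator $m$ and the quality with which an arbitrary $\underline b\in\mathbb{Q}^r$ is approximated by some $\underline a\in\frac{1}{m}\mathbb{Z}^r$; the paper achieves this with Dirichlet's simultaneous approximation theorem ($m\leq Q^r$ and $\vert a_i-b_i\vert\leq\frac{1}{mQ}$) combined with the Lipschitz estimate $\big\vert\norm{g+\sum a_i\gamma_i}-\norm{g+\sum b_i\gamma_i}\big\vert\leq\max_i\norm{\gamma_i}\cdot\sum_i\vert a_i-b_i\vert$. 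In your write-up this step is replaced by the assertion that the image of $H\setminus\Gamma_{\divi}$ in $U/W_0$ is ``a lattice-like discrete set bounded away from $0$'', justified by appeal to ``R\'emond-type arguments'' --- that is, you assert the conclusion of the lemma rather than prove it.

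A secondary but genuine error: you claim $\operatorname{span}_{\mathbb R}(\bar H)$ is finite-dimensional because $H/(H\cap\Gamma_{\divi})$ has finite rank. This is neither assumed nor true in the intended applications ($H=\mcG(F)$ for a field $F$, e.g.\ $F^*$ for a number field $F$, has infinite rank). The supporting argument --- that a subgroup on which the norm is bounded below is discrete, hence finitely generated, hence spans finitely many dimensions --- fails twice: the norm is bounded below only off $\Gamma_{\divi}$, so $\bar H$ need not be discrete; and discrete subgroups of infinite-dimensional normed spaces need not be finitely generated. Fortunately none of this finite-dimensional scaffolding is needed: the paper's proof never leaves the group $G$ and works directly with the $r+1$ elements $g,\gamma_1,\dots,\gamma_r$.
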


\begin{proof} 
We set $\rk(\Gamma)=r$ and choose linearly independent elements $\gamma_1,\ldots,\gamma_r\in\Gamma\cap H$. 

Let $g \in H\setminus \Gamma_{\divi}$, and let $\underline{b}=(b_1,\dots,b_r)$ be an element in $\mathbb{Q}^r$. We aim to bound 
\[
\norm{g+b_1\gamma_1+\ldots+b_r\gamma_r}
\]
from below by a constant independent on $g$. To this end, let $Q\geq 2r\max_{1\leq i \leq r} \norm{\gamma_i} \kappa^{-1}$ be an integer. By Dirichlet's approximation theorem, there is a positive integer $m$ and an $\underline{a}\in\frac{1}{m}\mathbb{Z}^r$ such that
\begin{equation}\label{fin}
 m\leq Q^r \text{ and } \vert a_i - b_i \vert \leq \frac{1}{mQ} \text{ for all } i \in \{1,\ldots,r\}.
\end{equation}
Since $g\in H\setminus\Gamma_{\divi}$ and $\gamma_1,\ldots,\gamma_r \in H$, we have $mg+ma_1\gamma_1+\ldots+ma_r\gamma_r \in H\setminus\Gamma_{\divi}$. Hence, by our assumptions on $H$ we have
\begin{equation}\label{bound}
\norm{g+a_1\gamma_1+\ldots+a_r\gamma_r} = \frac{1}{m}\norm{mg+ma_1\gamma_1+\ldots+ma_r\gamma_r} \geq \frac{\kappa}{m}.
\end{equation}
Moreover, applying properties (ii) and (iii) of Definition \ref{def:norm} for $\norm{.}$, yields
 \begin{align}\label{lipschitz} \big\vert \norm{g+a_1 \gamma_1+ \dots + a_r\gamma_r} - \norm{g + b_1 \gamma_1 + \dots + b_r \gamma_r} \big\vert &\leq \norm{(a_1 - b_1) \gamma_1 + \dots + (a_r - b_r)\gamma_r} \nonumber \\ &\leq \max_{1\leq i \leq r} \{\norm{\gamma_i}\} \cdot \sum_{i=1}^r \vert a_i - b_i \vert.
\end{align}
Combining \eqref{lipschitz} and \eqref{bound} yields that $\norm{g + b_1 \gamma_1 + \dots + b_r \gamma_r}$ is bounded from below by
\begin{align*}
 &\norm{g + a_1 \gamma_1+ \dots + a_r \gamma_r} - \big\vert \norm{g+a_1 \gamma_1+ \dots + a_r\gamma_r} - \norm{g + b_1 \gamma_1 + \dots + b_r \gamma_r} \big\vert \\
\geq & \frac{\kappa}{m} - \max_{1\leq i \leq r} \{\norm{\gamma_i}\} \cdot \sum_{i=1}^r \vert a_i - b_i \vert\overset{\eqref{fin}}{\geq} \frac{\kappa}{m} - \frac{\max_{1\leq i \leq r} \{\norm{\gamma_i}\} \cdot r}{mQ}\\  = & \frac{\kappa Q - \max_{1\leq i \leq r} \{\norm{\gamma_i}\} \cdot r}{mQ} \geq \frac{\max_{1\leq i \leq r} \{\norm{\gamma_i}\} \cdot r}{Q^{r+1}}.
\end{align*}
The latter is the postulated positive constant $c$ which only depends on $H$ and $\Gamma$.
\end{proof}

Let $\Gamma$ be a subgroup of $\mcG(\overline{\mathbb{Q}})$. Since $\End(\mcG)$ is finitely generated as an additive group, the group $\Gamma_{\sat}$ is of finite rank, whenever the rank of $\Gamma$ is finite. Hence, the next result is an immediate consequence of Lemma \ref{Bogomolov} if we set $G=\mcG(\overline{\mathbb{Q}})$, and $H=\mcG(F)$ for some field $F\subseteq\overline{\mathbb{Q}}$.

\begin{corollary}\label{cor:mcG}
Let $\Gamma \subseteq \mcG(\overline{\mathbb{Q}})$ be a subgroup of finite rank, and let $F$ be a subfield of $\overline{\mathbb{Q}}$ satisfying
\begin{enumerate}[(i)]
\item $\rk(\Gamma_{\sat})=\rk(\Gamma_{\sat}\cap \mcG(F))$, and
\item there is a positive constant $\kappa$ such that $\widehat{h}_{\mcG}(\alpha) \geq \kappa$ for all $\alpha \in \mcG(F)\setminus \Gamma_{\sat}$.
\end{enumerate}
Then there is a positive constant $c$ only depending on $F$ and $\Gamma$ such that $\norm{\alpha}_{h,\Gamma}\geq c$ for all $\alpha \in \mcG(F)\setminus \Gamma_{\sat}$.
\end{corollary}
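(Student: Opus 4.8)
The plan is to apply Lemma~\ref{Bogomolov} verbatim, taking $G=\mcG(\overline{\mathbb{Q}})$ with torsion subgroup $\mcG_{\tors}$, taking the norm $\norm{\cdot}=\norm{\cdot}_h$ on $\nicefrac{G}{G_{\tors}}$, letting $\Gamma_{\sat}$ play the r\^ole of the group ``$\Gamma$'' in that lemma, and setting $H=\mcG(F)$. Since $\Gamma_{\sat}$ is by construction a divisible hull, one has $(\Gamma_{\sat})_{\divi}=\Gamma_{\sat}$; hence the exceptional set $H\setminus(\Gamma_{\sat})_{\divi}$ appearing in the lemma is exactly $\mcG(F)\setminus\Gamma_{\sat}$, and the function $\norm{\cdot}_{\Gamma_{\sat}}$ produced by the lemma is exactly the function denoted $\norm{\cdot}_{h,\Gamma}$ in the statement of the corollary.

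First I would check the two hypotheses of Lemma~\ref{Bogomolov}. For the rank hypothesis, $\rk(\Gamma)<\infty$ implies $\rk(\Gamma_{\sat})<\infty$, because $\End(\mcG)$ is finitely generated as an abelian group and passing to a divisible hull leaves the rank unchanged; combined with hypothesis~(i) this gives $\rk(\Gamma_{\sat})=\rk(\Gamma_{\sat}\cap\mcG(F))<\infty$, which is the first bullet of the lemma. For the second bullet I must pass from the lower bound on $\widehat{h}_{\mcG}$ in hypothesis~(ii) to a lower bound on $\norm{\cdot}_h$, the point being that $\norm{\cdot}_h$ carries a square root on the abelian factor while $\widehat{h}_{\mcG}$ does not. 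Writing $\alpha=(P,\alpha_1,\dots,\alpha_N)$ and using the elementary inequalities $\sqrt{x}\geq\min\{x,1\}$ and $\min\{a,1\}+b\geq\min\{a+b,1\}$ valid for $x,a,b\geq0$, one gets
\[
\norm{\alpha}_h=\sqrt{h_{\mathcal{L}}(P)}+\sum_{i=1}^N h(\alpha_i)\ \geq\ \min\{h_{\mathcal{L}}(P),1\}+\sum_{i=1}^N h(\alpha_i)\ \geq\ \min\{\widehat{h}_{\mcG}(\alpha),1\},
\]
so that hypothesis~(ii) yields $\norm{\alpha}_h\geq\min\{\kappa,1\}>0$ for all $\alpha\in\mcG(F)\setminus\Gamma_{\sat}=H\setminus(\Gamma_{\sat})_{\divi}$, which is the second bullet.

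With both hypotheses in place, Lemma~\ref{Bogomolov} delivers a constant $c>0$ depending only on $H=\mcG(F)$ and on $\Gamma_{\sat}$ --- hence only on $F$ and $\Gamma$ --- such that $\norm{g}_{\Gamma_{\sat}}\geq c$ for every $g\in\mcG(F)\setminus(\Gamma_{\sat})_{\divi}$, which by the identifications above is precisely the claimed bound $\norm{\alpha}_{h,\Gamma}\geq c$ for all $\alpha\in\mcG(F)\setminus\Gamma_{\sat}$. As the text preceding the corollary already indicates, there is no real obstacle here: the corollary is an immediate consequence of the lemma, and the only two points that genuinely need a word are the finiteness of $\rk(\Gamma_{\sat})$ and the comparison of $\norm{\cdot}_h$ with $\widehat{h}_{\mcG}$ carried out above.
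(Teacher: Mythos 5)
Your proposal is correct and follows exactly the paper's route: the corollary is obtained by applying Lemma~\ref{Bogomolov} with $G=\mcG(\overline{\mathbb{Q}})$, $H=\mcG(F)$, and $\Gamma_{\sat}$ in the r\^ole of $\Gamma$, using that $\rk(\Gamma_{\sat})<\infty$ because $\End(\mcG)$ is finitely generated. Your explicit comparison $\norm{\alpha}_h\geq\min\{\widehat{h}_{\mcG}(\alpha),1\}$ is a correct verification of a detail the paper leaves implicit.
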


\begin{lemma}\label{heightisnorm}
Let $\Gamma$ be a subgroup of $\mcG(\overline{\mathbb{Q}})$ and $\norm{.}$ a norm on $\nicefrac{\mcG(\overline{\mathbb{Q}})}{\mcG_{\tors}}$. 
Then the function $\norm{.}_{\Gamma}$ is a seminorm on $\nicefrac{\mcG(\overline{\mathbb{Q}})}{\Gamma_{\sat}}$. If $\Gamma$ has finite rank, then the particular function $\norm{.}_{h,\Gamma}$ is a norm.
\end{lemma}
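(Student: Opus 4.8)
The plan is to verify the semi-norm axioms for $\norm{\cdot}_{\Gamma}$ by an elementary reindexing argument, and then to reduce the norm property of $\norm{\cdot}_{h,\Gamma}$ to a single arithmetic input: the non-degeneracy of $\widehat h_{\mcG}$ on finite rank subgroups. Throughout, recall that $\Gamma_{\sat}$ is a \emph{divisible} subgroup of $\mcG(\overline{\mathbb{Q}})$ containing $\mcG_{\tors}$ (the divisible hull of any subgroup of a divisible group is again divisible), and that $\norm{[\alpha]}_{h,\Gamma}$ is the infimum of $\norm{\alpha+s}_h$ over $s\in\Gamma_{\sat}$ — a finite non-negative real number depending only on the class of $\alpha$ in $\nicefrac{\mcG(\overline{\mathbb{Q}})}{\Gamma_{\sat}}$. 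First I would record that, for \emph{any} norm $\norm{\cdot}$ on $\nicefrac{\mcG(\overline{\mathbb{Q}})}{\mcG_{\tors}}$, the induced function $\norm{\cdot}_{\Gamma}$ is a semi-norm on $\nicefrac{\mcG(\overline{\mathbb{Q}})}{\Gamma_{\sat}}$: subadditivity follows from property (ii) of Definition \ref{def:norm} together with $\Gamma_{\sat}+\Gamma_{\sat}=\Gamma_{\sat}$ and the identity $\inf(X+Y)=\inf X+\inf Y$ for sets of reals bounded below; absolute homogeneity follows from property (iii) together with $n\Gamma_{\sat}=\Gamma_{\sat}$ for $n\neq 0$ (divisibility), the case $n=0$ being clear. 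This settles the first assertion.

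Now suppose $\rk(\Gamma)<\infty$; as noted in the text $\Gamma_{\sat}$ then has finite rank $r$, so its image $V$ in the torsion-free group $\nicefrac{\mcG(\overline{\mathbb{Q}})}{\mcG_{\tors}}$ is an $r$-dimensional $\mathbb{Q}$-vector subspace, and for $\overline\alpha$ the class of $\alpha$ one has $\alpha\notin\Gamma_{\sat}\iff\overline\alpha\notin V$. Since $\norm{[\alpha]}_{h,\Gamma}=\inf_{v\in V}\norm{\overline\alpha+v}_h$, the claim reduces to showing that $\overline\alpha\notin V$ forces this infimum to be positive, and for that it suffices to prove: \emph{the restriction of $\norm{\cdot}_h$ to any finite rank subgroup $W$ of $\nicefrac{\mcG(\overline{\mathbb{Q}})}{\mcG_{\tors}}$ extends to a genuine (i.e.\ positive-definite) norm on the finite-dimensional real vector space $W\otimes_{\mathbb{Z}}\mathbb{R}$.} Indeed, applied to $W=V+\mathbb{Q}\overline\alpha$ this gives $W\otimes_{\mathbb{Z}}\mathbb{R}=(V\otimes_{\mathbb{Z}}\mathbb{R})\oplus\mathbb{R}\overline\alpha$, so $V\otimes_{\mathbb{Z}}\mathbb{R}$ is a proper closed subspace not containing $\overline\alpha$, whence $\inf_{v\in V}\norm{\overline\alpha+v}_h$ is bounded below by the positive distance (in the extended norm) from $\overline\alpha$ to $V\otimes_{\mathbb{Z}}\mathbb{R}$.

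For the extension claim, decompose $\norm{(P,\alpha_1,\dots,\alpha_N)}_h=\sqrt{h_{\mathcal{L}}(P)}+\sum_{i=1}^N h(\alpha_i)$ according to $\mcG=A\times\mathbb{G}_m^N$ and treat the factors separately. On the abelian factor, $W$ is generated modulo torsion by finitely many points, all defined over some number field $K'$, and $h_{\mathcal{L}}$ is a positive-definite quadratic form on $A(K')\otimes\mathbb{R}$ (see \cite{BG}), so $\sqrt{h_{\mathcal{L}}}$ extends to a norm on the abelian part of $W\otimes_{\mathbb{Z}}\mathbb{R}$. On a toric factor, the relevant finite rank subgroup of $\overline{\mathbb{Q}}^*/\mu$ is contained in the divisible hull of a group $\langle\beta_1,\dots,\beta_m\rangle$ with the $\beta_j$ multiplicatively independent $S$-units of some number field $L$; the product formula gives $h(u)=\tfrac1{2[L:\mathbb{Q}]}\sum_v d_v\bigl|\log|u|_v\bigr|$, and by Dirichlet's $S$-unit theorem the map $u\mapsto(\log|u|_v)_v$ embeds these $S$-units as a lattice in a Euclidean space, so $h$ dominates a norm on $\langle\beta_1,\dots,\beta_m\rangle$; using $h(u)=\tfrac1n h(u^n)$ and continuity, $h$ extends to a norm on that toric part of $W\otimes_{\mathbb{Z}}\mathbb{R}$. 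Finally $W$ embeds into the product of its images under the projections of $\mcG$ onto its factors, $\norm{\cdot}_h$ is the sum of the pullbacks of the factor norms along these projections, and a vector of $W\otimes_{\mathbb{Z}}\mathbb{R}$ on which this sum of non-negative terms vanishes has all its factor-images equal to $0$, hence is itself $0$; as $\norm{\cdot}_h$ is also homogeneous, subadditive and continuous, it is a norm.

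The soft parts here are the semi-norm axioms and the concluding distance estimate. The real obstacle is the extension claim: a norm on a lattice need not extend to a norm on the surrounding real vector space — it may degenerate along an irrational line — so one genuinely has to invoke the non-degeneracy of the canonical height, namely positive-definiteness of the Néron--Tate height $h_{\mathcal{L}}$ on Mordell--Weil groups and non-vanishing of the $S$-regulator via the product formula, to exclude this behaviour for $\norm{\cdot}_h$. That is the step to get right.
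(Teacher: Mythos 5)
Your proof is correct, but it takes a genuinely different route to the positivity of $\norm{\cdot}_{h,\Gamma}$ than the paper does. The semi-norm part is identical (the same infimum manipulations using divisibility of $\Gamma_{\sat}$). For positive-definiteness, the paper picks linearly independent $\gamma_1,\dots,\gamma_r\in\Gamma_{\sat}$, sets $F=\mathbb{Q}(\gamma_1,\dots,\gamma_r,\alpha)$, invokes Northcott to get a uniform lower bound $\kappa>0$ for $\norm{\cdot}_h$ on $\mcG(F)\setminus\Gamma_{\sat}$, and then applies Corollary \ref{cor:mcG} --- i.e.\ R\'emond's Dirichlet-approximation argument of Lemma \ref{Bogomolov} --- to push that bound through the infimum over the divisible hull. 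You instead show that $\norm{\cdot}_h$ extends to a positive-definite norm on the finite-dimensional real vector space $W\otimes_{\mathbb{Z}}\mathbb{R}$, $W=V+\mathbb{Q}\overline{\alpha}$, and conclude by the positivity of the distance from a point to a closed proper subspace. You correctly identify the crux --- a norm on a lattice can degenerate along irrational directions after tensoring with $\mathbb{R}$ --- and your resolution is sound: on the abelian factor, positive-definiteness of $h_{\mathcal{L}}$ on $A(K')\otimes\mathbb{R}$; on the toric factors, the product formula plus discreteness of the $S$-unit lattice, since $\mathbb{Z}$-linear independence inside a discrete subgroup of Euclidean space forces $\mathbb{R}$-linear independence of the logarithmic vectors. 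The trade-off: your argument uses heavier arithmetic input (non-vanishing of the N\'eron--Tate and $S$-unit regulators and the specific shape of $\norm{\cdot}_h$), whereas the paper's argument is soft --- it needs only the discreteness of the norm on points over a fixed number field together with the abstract Lemma \ref{Bogomolov}, which the paper must develop anyway because Proposition \ref{prop:firstclaim} applies it to a norm that is \emph{not} the height norm, where your regulator argument would be unavailable. In exchange, your method yields the slightly stronger statement that the infimum over all of $V\otimes\mathbb{R}$, not merely over $\Gamma_{\sat}$, is positive.
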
 

\begin{proof} First we will show that $\norm{.}_{\Gamma}$ is a semi-norm, without additional assumptions on the group $\Gamma$. In order to do so, we have to check the properties (ii) and (iii) from Definition \ref{def:norm}. 
These properties follow from the respective properties of the norm $\norm{.}$. 

The triangular inequality follows from
\begin{align*}
 \norm{\alpha}_{\Gamma}+\norm{\beta}_{\Gamma} &=\inf\{\norm{\alpha+\gamma} \vert \gamma \in \Gamma_{\divi}\}+\inf\{\norm{\beta+\gamma'} \vert \gamma' \in \Gamma_{\divi}\}\\
 &=\inf\{\norm{\alpha+\gamma} + \norm{\beta+\gamma'} \vert \gamma,\gamma' \in \Gamma_{\divi}\}\\
 &\geq \inf\{\norm{\alpha+\beta+\gamma+\gamma'} \vert \gamma,\gamma' \in \Gamma_{\divi}\} \\
 &=\inf\{\norm{\alpha+\beta+\gamma''} \vert \gamma'' \in \Gamma_{\divi}\} = \norm{\alpha+\beta}_{\Gamma},
\end{align*}
and the homogeneity follows similarly from the equation
\begin{align*}
 \norm{n\alpha}_{\Gamma} &=\inf\{\norm{n\alpha + \gamma} \vert \gamma \in \Gamma_{\divi}\} =\inf\{\norm{n(\alpha+\gamma')} \vert \gamma' \in \Gamma_{\divi}\}\\
 &=\inf\{n\norm{\alpha+\gamma'} \vert \gamma' \in \Gamma_{\divi}\} =n\norm{\alpha}_{\Gamma}.
\end{align*}
From now on we assume that $\Gamma$ is of finite rank and that the norm $\norm{.}=\norm{.}_h$ is induced by the canonical height $\widehat{h}_{\mcG}$ on $\mcG$. Then also $\Gamma_{\sat}$ is of finite rank $r<\infty$. Let $\gamma_1,\dots,\gamma_r$ be linearly independent elements in $\Gamma_{\sat}$. We are left to prove property (i) from Definition \ref{def:norm}. Obviously we have $\norm{\alpha}_{h,\Gamma}=0$ for all $\alpha \in \Gamma_{\sat}$. By Northcott's theorem, $\norm{.}_{h}$ is discrete on $\nicefrac{\mcG(F)}{\mcG_{\tors}(F)}$ for all number fields $F$.
Hence, if $\alpha \in \mcG(\overline{\mathbb{Q}})\setminus \Gamma_{\sat}$ is arbitrary we set $F=\mathbb{Q}(\gamma_1,\dots,\gamma_r,\alpha)$ and apply Corollary \ref{cor:mcG}. This yields $\norm{\alpha}_{h,\Gamma} \neq 0$ and concludes the proof.
\end{proof}

We use the notation from Conjecture \ref{conj}. After replacing $\Gamma$ by the finite rank subgroup $\Gamma_{\sat}$ of $\mcG(\overline{\mathbb{Q}})$, Lemma \ref{heightisnorm} tells us that $\norm{\cdot}_{h,\Gamma_{\sat}}$ is a norm on $\nicefrac{\mcG(L)}{\Gamma_{\sat}}$. This norm is discrete if and only if the statement of Conjecture \ref{conj} (c) is true. Therefore, Conjecture \ref{conj} (c) is true if and only if $\norm{\cdot}_{h,\Gamma_{\sat}}$ is a discrete norm on $\nicefrac{\mcG(L)}{\Gamma_{\sat}}$.

Note that there exists a discrete norm on an abelian group if and only if this group is free abelian. This result was proved independently by Lawrence \cite{La84} and Zorzitto \cite{Zo85} for countable groups, and by Stepr\=ans \cite{St85} in the general case. As this result is the bridge between Conjecture \ref{conj} and Theorem \ref{thm:freeab}, we state it as a proposition. 

\begin{proposition}\label{LSZ}
An abelian group $G$ is free if and only if there is a discrete norm on $G$.
\end{proposition}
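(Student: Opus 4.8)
The plan is to recall that this is a known theorem (Lawrence, Zorzitto, Steprāns) and to sketch the two directions, since the ``only if'' direction is elementary and the ``if'' direction is the substantive one. For the \emph{only if} direction: if $G$ is free abelian with basis $(e_i)_{i\in I}$, then every $g\in G$ has a unique finite expansion $g=\sum_{i} n_i(g) e_i$ with $n_i(g)\in\mathbb{Z}$, and one sets $\norm{g}=\max_i \vert n_i(g)\vert$ (or $\sum_i \vert n_i(g)\vert$, which also works). Properties (i)--(iii) of Definition \ref{def:norm} are immediate from uniqueness of the expansion and $\mathbb{Z}$-linearity of the coefficient functionals $n_i$, and discreteness is clear because $\norm{g}\in\mathbb{N}$ for every $g\neq 0$, so $0$ is isolated in $\{\norm{g}\mid g\in G\}$.

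For the \emph{if} direction, one assumes a discrete norm $\norm{\cdot}$ on $G$ and must produce a free basis. First I would observe that $G$ is torsion free: property (iii) forces $\norm{ng}=\vert n\vert\,\norm{g}$, so a nonzero torsion element would have $\norm{g}=0$, contradicting (i). Next, by rescaling one may assume $\norm{g}\geq 1$ for all $g\neq 0$. The key structural step is to show that for every $g\neq 0$ the ``divisibility height'' is bounded, i.e. there is a largest $n$ with $g\in nG$; indeed if $g=n g_n$ for arbitrarily large $n$ then $\norm{g_n}=\norm{g}/n\to 0$, contradicting discreteness. This lets one replace each $g$ by a ``primitive'' representative and run an exhaustion/transfinite argument: well-order $G$, and greedily build a maximal $\mathbb{Z}$-independent subset whose span is a pure (i.e. divisibly closed) subgroup, using the discreteness bound at each step to guarantee that the quotient by the span remains torsion free and carries an induced discrete norm. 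For countable $G$ this is the Lawrence--Zorzitto induction; the general case is Steprāns's theorem, which requires a more delicate set-theoretic construction to control the transfinite tower.

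The main obstacle is precisely this last point: passing from countable groups to arbitrary groups is genuinely nontrivial, because the naive transfinite exhaustion need not terminate with a full basis --- the union of an increasing chain of free pure subgroups need not be free, and one must instead build a continuous chain (a ``filtration'') whose successive quotients are free, invoking the criterion that an abelian group filtered by a continuous well-ordered chain with free quotients is itself free. Since the present paper only needs the statement as a black box linking Conjecture \ref{conj}(c) to Theorem \ref{thm:freeab}, I would not reproduce Steprāns's construction; instead I would give the elementary ``only if'' direction in full, sketch the torsion-freeness and bounded-divisibility observations for the ``if'' direction, and cite \cite{La84,Zo85,St85} for the complete argument.
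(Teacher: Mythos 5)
Your treatment matches the paper's: the paper states this result without proof, citing Lawrence, Zorzitto, and Stepr\=ans, exactly as you do for the substantive ``if'' direction. Your explicit construction of the sup-norm on a free group for the ``only if'' direction is correct (integer coefficients give $\norm{g}\geq 1$ for $g\neq 0$, hence discreteness), and your decision to defer the hard direction to \cite{La84,Zo85,St85} is precisely what the paper does.
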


We conclude

\begin{lemma}
\begin{align*}
\text{Conjecture \ref{conj} (c) is true } &\Longleftrightarrow ~ \norm{\cdot}_{h,\Gamma_{\sat}} \text{ is a discrete norm on } \nicefrac{\mcG(L)}{\Gamma_{\sat}}\\
 &\Longrightarrow ~ \nicefrac{\mcG(L)}{\Gamma_{\sat}} \text{ is free abelian}
\end{align*}
\end{lemma}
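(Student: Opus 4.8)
The statement is a formal synthesis of what has already been assembled in this section, so the plan is mainly to check that the pieces fit together cleanly. First, one replaces $\Gamma$ by $\Gamma_{\sat}$; this is again a subgroup of finite rank (because $\End(\mcG)$ is a finitely generated abelian group), it is divisible and $\End(\mcG)$-stable, so $(\Gamma_{\sat})_{\sat}=\Gamma_{\sat}$. Lemma \ref{heightisnorm} then guarantees that $\norm{\cdot}_{h,\Gamma_{\sat}}$ is a genuine norm---not merely a seminorm---on $\nicefrac{\mcG(L)}{\Gamma_{\sat}}$, so that the phrase ``discrete norm'' is meaningful.

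For the first equivalence I would unwind the definition of $\norm{\cdot}_{h,\Gamma_{\sat}}$. Since $L\supseteq K(\Gamma_{\sat})$ we have $\Gamma_{\sat}\subseteq\mcG(L)$, and since $\Gamma_{\sat}$ is divisible the infimum defining $\norm{\alpha}_{h,\Gamma_{\sat}}$ ranges over the elements $\alpha+\gamma$ with $\gamma\in\Gamma_{\sat}$, each of which again lies in $\mcG(L)$ and lies outside $\Gamma_{\sat}$ exactly when $\alpha\notin\Gamma_{\sat}$. By Definition \ref{def:norm}, $\norm{\cdot}_{h,\Gamma_{\sat}}$ is discrete if and only if there is $c>0$ with $\norm{\alpha}_{h,\Gamma_{\sat}}\geq c$ for every class $\alpha\neq 0$. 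The remaining ingredient is the elementary comparison of $\widehat{h}_{\mcG}(\beta)=h_{\mathcal L}(P)+\sum_i h(\alpha_i)$ with $\norm{\beta}_h=\sqrt{h_{\mathcal L}(P)}+\sum_i h(\alpha_i)$: if either quantity is $\geq\delta$ then the other is $\geq\min\{\delta/2,\delta^2/4\}$, splitting according to whether the abelian or the toric part dominates. Combining these, a uniform positive lower bound for $\widehat{h}_{\mcG}$ on $\mcG(L)\setminus\Gamma_{\sat}$---that is, Conjecture \ref{conj} (c) for this particular $L$---is equivalent to a uniform positive lower bound for $\norm{\cdot}_{h,\Gamma_{\sat}}$ on the nonzero classes of $\nicefrac{\mcG(L)}{\Gamma_{\sat}}$. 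For the implication ``$\Leftarrow$'' one uses $\norm{\alpha}_{h,\Gamma_{\sat}}\leq\norm{\alpha}_h$ by taking $\gamma=0$; for ``$\Rightarrow$'' one applies Conjecture \ref{conj} (c) to each admissible translate $\alpha+\gamma$ and then takes the infimum over $\gamma\in\Gamma_{\sat}$.

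Finally, the implication to freeness is just Proposition \ref{LSZ} applied with $G=\nicefrac{\mcG(L)}{\Gamma_{\sat}}$: an abelian group carrying a discrete norm is free abelian. I do not anticipate a genuine obstacle here; the only points that need care are the bookkeeping that translation by $\Gamma_{\sat}$ keeps a point inside $\mcG(L)$ and outside $\Gamma_{\sat}$ (which is where $L\supseteq K(\Gamma_{\sat})$ enters), and the harmless discrepancy near $0$ between $\widehat{h}_{\mcG}$ and the square-root-twisted norm $\norm{\cdot}_h$.
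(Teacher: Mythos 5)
Your argument is correct and follows essentially the same route as the paper: Lemma \ref{heightisnorm} (after replacing $\Gamma$ by $\Gamma_{\sat}$) supplies the norm, unwinding the definition of $\norm{\cdot}_{h,\Gamma_{\sat}}$ together with the fact that $\Gamma_{\sat}$-translates stay in $\mcG(L)$ and outside $\Gamma_{\sat}$ gives the equivalence with Conjecture \ref{conj} (c), and Proposition \ref{LSZ} yields freeness. The only material you add is the explicit two-sided comparison between $\widehat{h}_{\mcG}$ and the square-root-twisted norm $\norm{\cdot}_h$, which the paper leaves implicit but which is indeed needed and handled correctly.
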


Hence, Theorem \ref{thm:freeab} tells us, that at least there exists \emph{some} discrete norm on $\nicefrac{\mcG(L)}{\Gamma_{\sat}}$ if $\mcG=\mathbb{G}_m$.

\section{Criterion for the freeness of an abelian group}

Again we fix some $\mcG = A\times \mathbb{G}_m^N$ defined over a number field $K$, where $A$ is an abelian variety and $N \in \mathbb{N}_0$. 

\begin{lemma}\label{freemodgamma}
Let $F$ be any subfield of $K(\Gamma_{\sat})$. The group $\nicefrac{\mcG(K(\Gamma_{\sat}))}{\Gamma_{\sat}}$ is free abelian if for every field $E$, with $F\subseteq E \subseteq K(\Gamma_{\sat})$ and $[E:F]< \infty$, we have 
\begin{enumerate}[(i)]
\item $\nicefrac{\mcG(E)}{(\Gamma_{\sat}\cap \mcG(E))}$ is free abelian, and
\item the torsion group of $\nicefrac{\mcG(K(\Gamma_{\sat}))}{\mcG(E) + \Gamma_{\sat}}$ has finite exponent.
\end{enumerate}
\end{lemma}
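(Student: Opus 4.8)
The plan is to build a discrete norm on $\nicefrac{\mcG(K(\Gamma_{\sat}))}{\Gamma_{\sat}}$ by combining norms on the pieces and then invoke Proposition \ref{LSZ}. First I would reduce to a concrete transfinite situation: write $K(\Gamma_{\sat})$ as an increasing union of the finite subextensions $E/F$, or more usefully, enumerate a well-ordered chain of intermediate fields $F = E_0 \subseteq E_1 \subseteq \cdots$ cofinal in $K(\Gamma_{\sat})$ with each step finite, so that $\nicefrac{\mcG(K(\Gamma_{\sat}))}{\Gamma_{\sat}}$ is the directed colimit of the groups $\nicefrac{\mcG(E_\lambda)}{\Gamma_{\sat}\cap\mcG(E_\lambda)}$. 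Hypothesis (i) says each of these is free abelian. The obstruction to the colimit being free is exactly the failure of the transition maps to be nice, and this is where hypothesis (ii) enters. I expect the cleanest route is via Pontryagin's criterion (as the paper hints): a countable abelian group is free iff every finitely generated subgroup is contained in a finitely generated \emph{pure} subgroup. So I would take a finitely generated subgroup of $\nicefrac{\mcG(K(\Gamma_{\sat}))}{\Gamma_{\sat}}$, descend it to some $\nicefrac{\mcG(E)}{\Gamma_{\sat}\cap\mcG(E)}$ with $[E:F]<\infty$, enlarge it there to a finitely generated pure subgroup using freeness of that group, and then argue that hypothesis (ii) forces this subgroup to remain pure after pushing up to $K(\Gamma_{\sat})$.

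The key computation is the purity-preservation step. Suppose $P$ is a finitely generated pure subgroup of $\nicefrac{\mcG(E)}{\Gamma_{\sat}\cap\mcG(E)}$, and suppose $n\cdot [\alpha] \in P$ for some $\alpha\in\mcG(K(\Gamma_{\sat}))$. I need to produce $[\beta]\in P$ with $n[\beta] = n[\alpha]$ in the big quotient, equivalently $n(\alpha-\beta)\in\Gamma_{\sat}$. The class of $\alpha$ modulo $\mcG(E)+\Gamma_{\sat}$ is then $n$-torsion in $\nicefrac{\mcG(K(\Gamma_{\sat}))}{\mcG(E)+\Gamma_{\sat}}$; by (ii) that torsion group has finite exponent, say $e$, independent of $\alpha$, so I may replace the offending denominators by divisors of $e\cdot(\text{something bounded})$. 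Enlarging $E$ once more — replacing $E$ by a finite extension $E'$ chosen so that it absorbs the finitely many relevant $n$-division points and the torsion witnesses up to exponent $e$ — I can arrange that $\alpha \in \mcG(E') + \Gamma_{\sat}$, write $\alpha = \alpha' + \gamma$ with $\alpha'\in\mcG(E')$, $\gamma\in\Gamma_{\sat}$, and then work entirely inside the free group $\nicefrac{\mcG(E')}{\Gamma_{\sat}\cap\mcG(E')}$, where purity is available by (i). The bounded-exponent hypothesis is precisely what keeps the "once more" enlargement finite and independent of the chosen finitely generated subgroup, which is what Pontryagin's criterion requires.

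The main obstacle I anticipate is bookkeeping the interaction between the two quotients $\Gamma_{\sat}\cap\mcG(E)$ and $\Gamma_{\sat}$: an element can be pure in $\nicefrac{\mcG(E)}{\Gamma_{\sat}\cap\mcG(E)}$ yet acquire new divisibility upstairs either because new field elements appear (controlled by (i) after enlarging $E$) or because $\Gamma_{\sat}$ itself is divisible and can "absorb" multiples (this is the subtle direction, and is exactly what (ii) is designed to bound). I would handle this by always passing to a finite extension $E'$ large enough that $\Gamma_{\sat}\cap\mcG(E')$ is "saturated enough" relative to the finitely many denominators in play, using that $\Gamma_{\sat}$ has finite rank so only finitely many independent divisible directions can intervene, together with the finite exponent from (ii) to cap the torsion contribution. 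Once purity is shown to be inherited, Pontryagin gives freeness of $\nicefrac{\mcG(K(\Gamma_{\sat}))}{\Gamma_{\sat}}$ in the countable case, and for the general (uncountable) case one runs the analogous argument through the full transfinite chain, checking that each successive quotient $\nicefrac{\mcG(E_{\lambda+1})}{\mcG(E_\lambda)+(\Gamma_{\sat}\cap\mcG(E_{\lambda+1}))}$ is free — which again follows from (i) and (ii) — so that the colimit is free by the standard filtration criterion for free abelian groups.
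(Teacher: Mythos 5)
Your overall strategy --- Pontryagin's criterion, with hypothesis (i) supplying freeness at finite level and hypothesis (ii) controlling what happens over $K(\Gamma_{\sat})$ --- is exactly the paper's. But the step you yourself flag as the key computation, purity preservation, does not work as you describe it. Purity of $P$ in $G=\nicefrac{\mcG(K(\Gamma_{\sat}))}{\Gamma_{\sat}}$ is quantified over \emph{all} $\alpha\in\mcG(K(\Gamma_{\sat}))$ and all $n$ with $n[\alpha]\in P$, so you cannot ``enlarge $E$ once more'' to a finite extension $E'$ that ``absorbs the finitely many relevant $n$-division points'': the witnesses $\alpha$ are neither finite in number nor known in advance, and --- more fundamentally --- hypothesis (ii) never places $\alpha$ itself in $\mcG(E')+\Gamma_{\sat}$; it only gives $e\alpha\in\mcG(E)+\Gamma_{\sat}$, where $e$ is the exponent from (ii). No finite enlargement of $E$ removes that factor $e$, since the same obstruction recurs verbatim for $E'$. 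So the sentence ``I can arrange that $\alpha\in\mcG(E')+\Gamma_{\sat}$'' is the gap.

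The repair is to \emph{use} the multiple rather than try to remove it, which is what the paper does. Note first that $G$ is torsion-free because $\Gamma_{\sat}$ is divisibly closed (you need this anyway: your inference from $n[\beta]=n[\alpha]$ to $[\beta]=[\alpha]$, and Pontryagin's criterion itself, both require it). Let $H$ be the preimage in $G$ of the torsion subgroup of $\nicefrac{G}{\left(\nicefrac{(\mcG(E)+\Gamma_{\sat})}{\Gamma_{\sat}}\right)}$, i.e.\ the pure closure of $\nicefrac{(\mcG(E)+\Gamma_{\sat})}{\Gamma_{\sat}}$ in $G$. By (ii), $eH\subseteq\nicefrac{(\mcG(E)+\Gamma_{\sat})}{\Gamma_{\sat}}\cong\nicefrac{\mcG(E)}{(\mcG(E)\cap\Gamma_{\sat})}$, and multiplication by $e$ is injective on the torsion-free group $G$; hence $H$ embeds into a free abelian group and is free by (i), it contains the given finite set, and $\nicefrac{G}{H}$ is torsion-free by construction. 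Pontryagin's criterion (in the form: every finite subset lies in a free subgroup with torsion-free quotient, valid here since $G$ is countable --- everything lives in $\overline{\mathbb{Q}}$, so your closing paragraph on an uncountable transfinite chain is moot, and its claim that the successive quotients are free ``follows from (i) and (ii)'' is not justified) then finishes the proof. If you prefer your finitely generated version of the criterion, the same trick shows that the pure closure of your $P$ in $G$ is finitely generated, because $e$ times it lands in the finitely generated pure closure of $P$ inside the free group $\nicefrac{\mcG(E)}{(\mcG(E)\cap\Gamma_{\sat})}$.
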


This is mainly an application of a classification result of Pontryagin. The proof follows very closely the proofs of \cite[Lemma 1]{Ma72} and \cite[Proposition 2.3]{GHP}.

\begin{proof} 
By a theorem of Pontryagin (cf. \cite[Theorem VI.2.3]{EM}) an abelian group $G$ is free abelian, if every finite subset of $G$ is contained in a free abelian subgroup $H\subseteq G$ such that $\nicefrac{G}{H}$ is torsion free.

Therefore, let $S=\{[\alpha_1],\ldots,[\alpha_s]\}\subseteq \nicefrac{\mcG(K(\Gamma_{\sat}))}{\Gamma_{\sat}}$ and set $E:=K(\mcG_{\tors},\alpha_1,\ldots,\alpha_s)$. Obviously $E$ is a finite extension of $K(\mcG_{\tors})$ and we have 
\[
S \subseteq \nicefrac{\mcG(E)+\Gamma_{\sat}}{\Gamma_{\sat}}\cong \nicefrac{\mcG(E)}{\mcG(E)\cap \Gamma_{\sat}}.
\]
Let $m\in\mathbb{N}$ be the exponent of the torsion subgroup of
\[
\nicefrac{\left( \nicefrac{\mcG(K(\Gamma_{\sat}))}{\Gamma_{\sat}} \right)}{\left( \nicefrac{\mcG(E)+\Gamma_{\sat}}{\Gamma_{\sat}}  \right)}\cong \nicefrac{\mcG(K(\Gamma_{\sat}))}{\mcG(E)+\Gamma_{\sat}}.
\]
Note, that this exponent is indeed an element of $\mathbb{N}$, by assumption (ii) of the lemma. Now define
\[
H:=\left\{[\alpha]\in \nicefrac{\mcG(K(\Gamma_{\sat}))}{\Gamma_{\sat}} \vert m\cdot [\alpha] \in \nicefrac{\mcG(E)+\Gamma_{\sat}}{\Gamma_{\sat}} \right\}.
\]
By assumption (i) the group $\nicefrac{\mcG(E)+\Gamma_{\sat}}{\Gamma_{\sat}}\cong \nicefrac{\mcG(E)}{\mcG(E)\cap \Gamma_{\sat}}$ is free abelian, and hence $H$ is free abelian. Moreover, by construction the quotient of $\nicefrac{\mcG(K(\Gamma_{\sat}))}{\Gamma_{\sat}}$ by $H$ is torsion free. Hence, $S \subseteq H$ and $H$ satisfies the hypothesis of Pontryagins theorem. It follows that under the assumptions (i) and (ii) the group $\nicefrac{\mcG(K(\Gamma_{\sat}))}{\Gamma_{\sat}}$ is free abelian.
\end{proof}

As stated in the introduction, we will use the result from Bays, Hart, and Pillay that $\nicefrac{\mcG(K(\mcG_{\tors}))}{\mcG_{\tors}}$ is free abelian. The case $\mcG=\mathbb{G}_m$ is originally due to Iwasawa \cite{Iw53}, and the case $\mcG=A$ is originally due to Larsen \cite{La05}. We will sketch a proof of their result.

\begin{theorem}[\cite{BHP}, Lemma A.7]\label{thm:BHP}
Let $\mcG$ and $K$ be as above and let $K'/K$ be finite. Then
\begin{enumerate}[(i)]
\item $\nicefrac{\mcG(K')}{\mcG(K')\cap\mcG_{\tors}}$ is free abelian, and
\item the exponent of the torsion group of $\nicefrac{\mcG(K'(\mcG_{\tors}))}{\mcG(K')+\mcG_{\tors}}$ is finite.
\end{enumerate}
In particular, $\nicefrac{\mcG(K(\mcG_{\tors}))}{\mcG_{\tors}}$ is a free abelian group.
\end{theorem}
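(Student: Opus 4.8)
The plan is to prove Theorem~\ref{thm:BHP} in two parts, following the structure of the statement, and to derive the ``in particular'' clause from the two-part statement by an easy limiting argument.

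For part~(i), I would fix a finite extension $K'/K$ and argue separately for the torus and abelian variety factors, since $\mcG(K')/(\mcG(K')\cap\mcG_{\tors})$ splits as a product over the factors $A$ and $\mathbb{G}_m^N$. For $\mcG=A$ this is the Mordell--Weil theorem: $A(K')$ is finitely generated, so modulo its torsion subgroup it is free abelian of finite rank. For $\mcG=\mathbb{G}_m^N$ one uses the unit theorem: $\mathcal{O}_{K'}^*$ is finitely generated, hence so is $(K')^*$ modulo torsion -- indeed $(K')^*/\mu(K')$ is free abelian, being the direct sum of a free abelian group (unit contribution) and a free abelian group on the nonzero primes of $\mathcal{O}_{K'}$ via the valuation maps (this is the standard fact that $(K')^*$ is, up to roots of unity, free abelian). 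Taking the product over all factors gives a free abelian group, since a finite product of free abelian groups is free abelian. So part~(i) is essentially a packaging of classical finiteness theorems.

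For part~(ii), which is the substantive point, I would invoke the results of Iwasawa~\cite{Iw53} for $\mcG=\mathbb{G}_m$ and Larsen~\cite{La05} for $\mcG=A$, and the general statement~\cite{BHP}. The heart of the matter is that passing from the field $K'$ to the infinite extension $K'(\mcG_{\tors})$ does not introduce ``infinitely divisible'' torsion in the quotient $\mcG(K'(\mcG_{\tors}))/(\mcG(K')+\mcG_{\tors})$: there is a bound $m$, depending only on $K'$ and $\mcG$, such that any class killed by some integer is already killed by $m$. In the cyclotomic case this comes from analyzing when an equation $x^n=a$ with $a\in K'$ has a solution generating a subfield of $K'(\mu)$ -- Kummer theory together with the structure of $\Gal(\mathbb{Q}(\mu)/\mathbb{Q})$ forces the relevant $n$ to divide a fixed power of a small integer. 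For abelian varieties Larsen's theorem plays the analogous role. I would state that I am taking these as black boxes (as the excerpt already signals: ``We will sketch a proof of their result''), and reproduce only the sketch.

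Finally, for the ``in particular'' clause: write $K(\mcG_{\tors})$ as an increasing union of finite extensions $K_i/K$ with $\bigcup_i K_i = K(\mcG_{\tors})$, and apply Lemma~\ref{freemodgamma} with $\Gamma$ of rank zero, so that $\Gamma_{\sat}=\mcG_{\tors}$, and with $F=K$. Hypothesis~(i) of that lemma is exactly part~(i) above applied to each finite $E$ with $K\subseteq E\subseteq K(\mcG_{\tors})$, and hypothesis~(ii) is part~(ii) above applied with $K'=E$ (since $E(\mcG_{\tors})=K(\mcG_{\tors})$ for such $E$). Lemma~\ref{freemodgamma} then yields that $\mcG(K(\mcG_{\tors}))/\mcG_{\tors}$ is free abelian. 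The main obstacle is genuinely part~(ii) -- the finiteness of the exponent is the one place where one cannot get away with soft finitely-generatedness arguments and must use the arithmetic input of Iwasawa and Larsen; everything else is bookkeeping.
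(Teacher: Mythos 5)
Your proposal is correct in outline, and the treatment of the ``in particular'' clause (apply Lemma~\ref{freemodgamma} with $\Gamma=\{0\}$, $F=K$, using (i) and (ii) for each finite $E$ with $E(\mcG_{\tors})=K(\mcG_{\tors})$) is exactly what the paper does. Where you diverge is in the two main parts. For (i) you use the classical finiteness theorems directly: Mordell--Weil for the abelian factor, and for $\mathbb{G}_m^N$ the decomposition of $(K')^*/\mu(K')$ via valuation maps and the unit theorem. The paper instead observes that Northcott's theorem makes the height norm $\norm{\cdot}_h$ \emph{discrete} on $\nicefrac{\mcG(K')}{\mcG(K')\cap\mcG_{\tors}}$ and invokes Proposition~\ref{LSZ} (Lawrence--Zorzitto--Stepr\=ans). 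Both arguments are valid; yours is more elementary and self-contained, while the paper's is a one-liner given that Proposition~\ref{LSZ} is already the central tool of the whole article. For (ii) you declare the result a black box, gesturing at Kummer theory (Iwasawa) and Larsen's theorem. The paper does give an actual derivation, albeit from a different black box: for a torsion class $[\alpha]$ one forms the cocycle $\tau\mapsto\tau(\alpha)-\alpha$ in $H^1(\Gal(K'(\mcG_{\tors})/K'),\mcG[n])$, and \cite[Lemma A.3]{BHP} supplies a constant $c$ depending only on $\mcG$ and $K'$ that kills this cohomology class uniformly in $n$; unwinding the coboundary gives $c\alpha\in\mcG(K')+\mcG_{\tors}$, so the exponent divides $c$. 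Since the theorem is explicitly attributed to \cite{BHP}, your deferral is defensible, but if you want your write-up to match the paper's level of detail you should reproduce this short cohomological reduction rather than the Kummer-theoretic heuristic, which is not the route taken here and which you would otherwise have to carry out separately for the abelian and toric factors.
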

\begin{proof}
In this proof we use the language of continuous group cohomology. The field $K'$ is a number field. Hence the norm $\norm{\cdot}_h$ induced by the canonical height of $\mcG$ is discrete on $\nicefrac{\mcG(K')}{\mcG(K')\cap\mcG_{\tors}}$. Hence, statement (i) follows from Proposition \ref{LSZ}. In order to prove (ii), let $\alpha \in \mcG(K'(\mcG_{\tors}))$ be a representative of a torsion point in $\nicefrac{\mcG(K'(\mcG_{\tors}))}{\mcG(K')+\mcG_{\tors}}$. Then, the map $\tau \mapsto \tau(\alpha)-\alpha$ represents an element in $H^1(\Gal(K'(\mcG_{\tors})/K'),\mcG[n])$ for some $n\in \mathbb{N}$. By Lemma A.3 from \cite{BHP}, there is a constant $c$ only depending on $\mcG$ and $K'$ such that $\tau \mapsto c\cdot(\tau(\alpha)-\alpha)$ is equivalent to the zero map in $H^1(\Gal(K'(\mcG_{\tors})/K'),\mcG[n])$. Hence, there is an element $P\in\mcG[n]$ such that
\[
c\cdot(\tau(\alpha)-\alpha)=\tau(P)-P \quad \forall ~ \tau \in \Gal(K'(\mcG_{\tors})/K').
\]
It follows
\[
\tau(c\cdot \alpha - P) = c\cdot \alpha - P \quad \forall ~ \tau \in \Gal(K'(\mcG_{\tors})/K'),
\]
and hence $c\cdot \alpha -P \in \mcG(K')$, respectively $c\cdot \alpha \in \mcG(K') + \mcG_{\tors}$. This means that the order of the residue class of $\alpha \in \nicefrac{\mcG(K'(\mcG_{\tors}))}{\mcG_{\tors}}$ divides $c$, which proves statement (i).

If we apply Lemma \ref{freemodgamma} with $\Gamma=\{0\}$ and $F=K$, it follows from (i) and (ii) that $\nicefrac{\mcG(K(\mcG_{\tors}))}{\mcG_{\tors}}$ is free abelian.
\end{proof}

\begin{proposition}\label{prop:firstclaim}
Let $K'/K$ be finite and let $\Gamma$ be a finite rank subgroup of $\mcG(\overline{\mathbb{Q}})$. For any field $K \subseteq E \subseteq K'(\mcG_{\tors})$, the group $\nicefrac{\mcG(E)}{\mcG(E)\cap \Gamma_{\sat}}$ is free abelian.
\end{proposition}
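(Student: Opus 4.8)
The plan is to deduce this from Theorem~\ref{thm:BHP} by a soft group-theoretic argument, with no new height input. First I would record that $\nicefrac{\mcG(E)}{\mcG_{\tors}(E)}$ is free abelian, where $\mcG_{\tors}(E):=\mcG(E)\cap\mcG_{\tors}$: since $K\subseteq E\subseteq K'(\mcG_{\tors})$, the inclusion $\mcG(E)\hookrightarrow\mcG(K'(\mcG_{\tors}))$ descends to an injection
\[
\nicefrac{\mcG(E)}{\mcG_{\tors}(E)}\hookrightarrow \nicefrac{\mcG(K'(\mcG_{\tors}))}{\mcG_{\tors}},
\]
whose target is free abelian by the final assertion of Theorem~\ref{thm:BHP} (applied with $K'$ in the role of $K$). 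Since subgroups of free abelian groups are free abelian, the claim follows.

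I would then set $F:=\nicefrac{\mcG(E)}{\mcG_{\tors}(E)}$ and $H:=\nicefrac{(\mcG(E)\cap\Gamma_{\sat})}{\mcG_{\tors}(E)}$; this is legitimate because $\mcG_{\tors}\subseteq\Gamma_{\sat}$, and one has a canonical isomorphism $\nicefrac{\mcG(E)}{\mcG(E)\cap\Gamma_{\sat}}\cong\nicefrac{F}{H}$. The crucial observation is that $H$ is a \emph{pure} subgroup of $F$ of finite rank. For purity: if $\alpha\in\mcG(E)$ satisfies $n\alpha\in\mcG(E)\cap\Gamma_{\sat}$ for some $n\in\mathbb{N}$, then $n\alpha\in\Gamma_{\sat}$, and since $\Gamma_{\sat}$ is a divisible hull it is closed under division, whence $\alpha\in\Gamma_{\sat}$ and so $\alpha\in\mcG(E)\cap\Gamma_{\sat}$; as $\mcG_{\tors}(E)\subseteq\mcG(E)\cap\Gamma_{\sat}$, this passes to the quotient $F$ and shows that $H$ is pure in $F$. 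For finite rank: $H$ is a subgroup of the free abelian group $F$, hence free, of rank at most $\rk(\Gamma_{\sat})$, which is finite because $\Gamma$ has finite rank and $\End(\mcG)$ is a finitely generated abelian group.

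It remains to invoke the elementary fact that a pure subgroup of finite rank of a free abelian group is a direct summand. Having finite rank, $H$ is finitely generated, hence contained in a finitely generated free direct summand $F_0$ of $F$ — namely the span of the finitely many basis elements of $F$ that occur in a finite generating set of $H$. Then $H$ is pure in $F_0$, and since $F_0$ is finitely generated, $\nicefrac{F_0}{H}$ is finitely generated and torsion free, hence free; therefore $0\to H\to F_0\to\nicefrac{F_0}{H}\to 0$ splits, so $H$ is a direct summand of $F_0$ and thus of $F$. Consequently $\nicefrac{\mcG(E)}{\mcG(E)\cap\Gamma_{\sat}}\cong\nicefrac{F}{H}$ is isomorphic to a direct summand of the free abelian group $F$, and hence is itself free abelian.

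The only points requiring care are the first step and the purity of $\mcG(E)\cap\Gamma_{\sat}$ in $\mcG(E)$: the former carries all the arithmetic content (via Theorem~\ref{thm:BHP}, and ultimately via the theorems of Iwasawa and Larsen), while the latter is precisely what makes $\nicefrac{F}{H}$ torsion free — after which the argument is purely formal. I also note that nothing here is special to $\mcG=\mathbb{G}_m$; the reasoning applies verbatim to every $\mcG=A\times\mathbb{G}_m^N$.
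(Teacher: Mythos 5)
Your proof is correct, but it takes a genuinely different route from the paper. The paper also starts by embedding $\nicefrac{\mcG(E)}{\mcG(E)_{\tors}}$ into the free abelian group $\nicefrac{\mcG(K'(\mcG_{\tors}))}{\mcG_{\tors}}$ of Theorem \ref{thm:BHP}, but then stays inside the discrete-norm framework: it picks a discrete norm $\norm{.}$ on $\nicefrac{\mcG(E)}{\mcG(E)_{\tors}}$ via Proposition \ref{LSZ}, extends it to the divisible hull, applies R\'emond's Lemma \ref{Bogomolov} with $\tilde{\Gamma}=\Gamma_{\sat}\cap\mcG(E)$ to bound $\norm{.}_{\tilde{\Gamma}}$ away from $0$, and concludes by a second application of Proposition \ref{LSZ} that $\nicefrac{\mcG(E)}{\tilde\Gamma}$ is free. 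You instead observe that $H=\nicefrac{(\mcG(E)\cap\Gamma_{\sat})}{\mcG_{\tors}(E)}$ is a finite-rank subgroup of the free abelian group $F$ with $\nicefrac{F}{H}$ torsion free (your ``purity'' computation, which correctly uses that $\Gamma_{\sat}$ is closed under division and that $\rk(\Gamma_{\sat})<\infty$), and then run the standard argument that such an $H$ sits inside a finitely generated free direct summand $F_0$, that $\nicefrac{F_0}{H}$ is finitely generated torsion free and hence free, and that the resulting splitting makes $H$ a direct summand of $F$. All steps check out, including the third isomorphism theorem identification $\nicefrac{\mcG(E)}{\mcG(E)\cap\Gamma_{\sat}}\cong\nicefrac{F}{H}$ (legitimate since $\mcG_{\tors}\subseteq\Gamma_{\sat}$). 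Your argument is purely group-theoretic once freeness of $F$ is known, and it sidesteps both Lemma \ref{Bogomolov} and the slightly delicate step of extending the discrete norm to $\nicefrac{\mcG(E)_{\divi}}{\mcG_{\tors}}$; the paper's route, on the other hand, produces an explicit discrete norm on the quotient, which fits the norm-theoretic theme connecting Theorem \ref{thm:freeab} to Conjecture \ref{conj}(c). Both proofs work for general $\mcG=A\times\mathbb{G}_m^N$, as you note.
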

\begin{proof} 
By Theorem \ref{thm:BHP} we know that $\nicefrac{\mcG(K'(\mcG_{\tors}))}{\mcG_{\tors}}$ is free abelian. Therefore $\nicefrac{\mcG(E)}{\mcG(E)_{\tors}}$ is, as a subgroup, free abelian. Hence by Proposition \ref{LSZ} there is a discrete norm $\norm{.}$ on $\nicefrac{\mcG(E)}{\mcG(E)_{\tors}}$.

Set $\tilde{\Gamma}=\Gamma_{\sat}\cap \mcG(E)$ which is a finite rank subgroup, since $\Gamma_{\sat}$ is. The discrete norm $\norm{.}$ extends uniquely to a norm on $\nicefrac{\mcG(E)_{\divi}}{\mcG_{\tors}}$. Hence we can apply Lemma \ref{Bogomolov} to deduce the existence of a constant $c > 0$ such that 
\begin{equation}\label{eq:firstpart}
 \norm{\alpha}_{\tilde{\Gamma}} = \inf \{\norm{\alpha+\gamma} \vert \gamma\in \tilde{\Gamma}_{\divi} \} \geq c \text{ for all } \alpha \in \mcG(E)\setminus \tilde{\Gamma}_{\divi}.
\end{equation}
By Lemma \ref{heightisnorm} we already know that $\norm{.}_{\tilde{\Gamma}}$ is a seminorm on $\nicefrac{\mcG(E)}{\tilde{\Gamma}}$. Therefore \eqref{eq:firstpart} tells us that $\norm{.}_{\tilde{\Gamma}}$ is actually a discrete norm. If we apply Proposition \ref{LSZ} once more, we achieve that $\nicefrac{\mcG(E)}{\tilde{\Gamma}}$ is free abelian.
\end{proof}

\section{Proof of Theorem \ref{thm:freeab}}

From now on we fix $\mcG=\mathbb{G}_m$; i.e. we work in the multiplicative group of $\overline{\mathbb{Q}}^*$. Recall that we denote by $\mu$ the set of all roots of unity.

\begin{proposition}\label{prop:gmtorsionfree}
Let $\Gamma=\langle \gamma_1,\ldots,\gamma_r \rangle$ be a subgroup of $\overline{\mathbb{Q}}^*$, and let $K$ be a number field. We set $L=K(\Gamma_{\sat})$ and let $E\subseteq L$ be a finite extension of $K(\mu,\gamma_1,\ldots,\gamma_r)$. Then the group $\nicefrac{L^*}{E^* \Gamma_{\sat}}$ is torsion free.
\end{proposition}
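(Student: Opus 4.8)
The plan is to show directly that if $\alpha \in L^*$ satisfies $\alpha^n \in E^*\Gamma_{\sat}$ for some $n \geq 1$, then already $\alpha \in E^*\Gamma_{\sat}$. Write $\alpha^n = \beta \delta$ with $\beta \in E^*$ and $\delta \in \Gamma_{\sat}$. Since $\Gamma = \langle \gamma_1,\ldots,\gamma_r\rangle$ and $\mathbb{G}_m$ has $\End(\mathbb{G}_m) = \mathbb{Z}$, we have $\Gamma_{\sat} = \Gamma_{\divi} = \{\eta \in \overline{\mathbb{Q}}^* \mid \eta^k \in \Gamma \text{ for some } k\}$, so some power $\delta^k$ lies in $\Gamma$, hence is a monomial $\zeta\gamma_1^{e_1}\cdots\gamma_r^{e_r}$ up to a root of unity $\zeta$. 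Because $\mu \subseteq E$ and $\gamma_1,\ldots,\gamma_r \in E$, every element of $\Gamma_{\sat}$ is a radical of an element of $E^*$; in particular $\delta$ is a root of $\zeta\gamma_1^{e_1}\cdots\gamma_r^{e_r} \in E^*$. Replacing $n$ by a suitable multiple, I may therefore assume $\alpha^n = \beta \in E^*$ outright, i.e.\ reduce to: \emph{if $\alpha \in L^*$ and $\alpha^n \in E^*$, then $\alpha \in E^*\Gamma_{\sat}$}.

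Next I would analyze the extension $E(\alpha)/E$. Since $\alpha^n \in E^*$ and $E$ contains the $n$th roots of unity, $E(\alpha)/E$ is a Kummer (cyclic) extension of degree $d \mid n$, with $\alpha^d \in E^*$ after possibly shrinking $n$ to $d$; this is the standard theory of cyclic extensions via Hilbert 90 / Kummer theory. The key point is to use that $E(\alpha) \subseteq L = K(\Gamma_{\sat})$, which is a \emph{radical} extension of $K$ generated by radicals of the fixed monomials $\gamma_i$. So I want to understand which Kummer extensions of $E$ can sit inside such a radical tower. Concretely, $\alpha^d = \beta \in E^*$ and $E(\alpha) \subseteq L$; I would argue that the class of $\beta$ in $E^*/(E^*)^d$ must, up to $d$th powers, be represented by a product of powers of the $\gamma_i$ — i.e.\ $\beta \equiv \gamma_1^{f_1}\cdots\gamma_r^{f_r} \pmod{(E^*)^d}$ — because the only "new" $d$th roots available inside the radical field $L$ over $E$ are (products of roots of) the $\gamma_i$. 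Granting this, $\beta = \gamma_1^{f_1}\cdots\gamma_r^{f_r}\cdot y^d$ for some $y \in E^*$, hence $\alpha = \zeta' \cdot \gamma_1^{f_1/d}\cdots\gamma_r^{f_r/d}\cdot y$ for a root of unity $\zeta'$, and since $\gamma_i^{f_i/d} \in \Gamma_{\sat}$, $\zeta' \in \mu \subseteq \Gamma_{\sat}$, and $y \in E^*$, we conclude $\alpha \in E^*\Gamma_{\sat}$, as desired.

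The main obstacle is the italicized claim that a Kummer class of $E$ realized inside $L$ must be a monomial in the $\gamma_i$ modulo $d$th powers. Making this precise requires care: one should filter $L/E$ through the finitely generated subextensions $E(\gamma_1^{1/N},\ldots,\gamma_r^{1/N})/E$ for $N$ divisible by enough primes, and use that the Kummer group of such a radical extension over $E$ — namely the subgroup of $E^*/(E^*)^{N}$ generated by the classes of $\gamma_1,\ldots,\gamma_r$ (together with roots of unity, which are trivial mod $N$th powers here since $\mu \subseteq E$) — contains the class of $\beta$. A clean way to see this: $E(\alpha) \subseteq E(\gamma_1^{1/N},\ldots,\gamma_r^{1/N})$ for suitable $N$, the latter is the compositum over $E$ of the cyclic extensions $E(\gamma_i^{1/N})$, and by Kummer theory the lattice of subextensions corresponds to subgroups of $\langle \overline{\gamma_1},\ldots,\overline{\gamma_r}\rangle \subseteq E^*/(E^*)^N$; hence $\overline{\beta}$ lies in that subgroup. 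One subtlety to handle is $p$-power torsion in roots of unity when $\mu \subseteq E$ is used — but since $E \supseteq K(\mu)$ contains \emph{all} roots of unity, $\mu$ is $N$-divisible for every $N$, so the root-of-unity contribution to $E^*/(E^*)^N$ vanishes and causes no trouble. I would also double-check the reduction steps are uniform in the chosen exponents so that no hidden dependence on $\alpha$ remains.
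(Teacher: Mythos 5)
Your proposal is correct, and it rests on the same core input as the paper's proof---Kummer theory over a field containing all roots of unity---but it is organized genuinely differently. The paper makes the same initial reduction (to $\alpha^n\in E^*$; in fact it glosses over the $\Gamma_{\sat}$-factor that you strip off explicitly, so your write-up of that step is the more careful one) and then runs an $r$-step descent through the tower $E\subseteq E(\gamma_1^{\nicefrac{1}{m_1}})\subseteq\cdots\subseteq E(\gamma_1^{\nicefrac{1}{m_1}},\ldots,\gamma_r^{\nicefrac{1}{m_r}})$: at each stage it uses only the classification of intermediate fields of a \emph{single} cyclic radical extension, and an explicit comparison of $\sigma(\alpha)/\alpha$ with $\sigma(\gamma_i^{\nicefrac{d_i}{m_i}})/\gamma_i^{\nicefrac{d_i}{m_i}}$ for a Galois generator $\sigma$, to divide $\alpha$ by a radical of one $\gamma_i$ and drop one level. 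You instead invoke the full (non-cyclic) Kummer correspondence for the compositum $M=E(\gamma_1^{\nicefrac{1}{N}},\ldots,\gamma_r^{\nicefrac{1}{N}})$ in one shot: the Kummer group of $M/E$ is $\langle (E^*)^N,\gamma_1,\ldots,\gamma_r\rangle$, the subfield $E(\alpha)$ corresponds to $\langle (E^*)^N,\alpha^N\rangle$, and monotonicity of the correspondence gives $\alpha^N=\gamma_1^{f_1}\cdots\gamma_r^{f_r}z^N$ with $z\in E^*$, whence $(\alpha/z)^N\in\Gamma$ and $\alpha\in E^*\Gamma_{\sat}$. This buys a shorter, induction-free argument at the price of quoting the general Kummer duality rather than just the cyclic case. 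One point to tidy when writing it up: your middle paragraph asserts the conclusion at level $d=[E(\alpha):E]$, namely $\beta=\alpha^d\equiv\gamma_1^{f_1}\cdots\gamma_r^{f_r}\pmod{(E^*)^d}$, but the correspondence naturally yields the statement at level $N$ for the class of $\alpha^N=\beta^{\nicefrac{N}{d}}$; your final paragraph already works at level $N$, which is the version to keep, and the endgame goes through unchanged.
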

\begin{proof}
Let $[\alpha]\in \nicefrac{L^*}{E^* \Gamma_{\sat}}$ be a torsion point. Then there exists a natural number $n$ with $\alpha^n \in E$.

Since $\alpha$ is in $L=K(\Gamma_{\sat})$, we have $E(\alpha)\subseteq E(\gamma_1^{\nicefrac{1}{m_1}},\dots,\gamma_r^{\nicefrac{1}{m_r}})$ for some $m_1,\dots,m_r \in \mathbb{N}$. We set $E_0 = E$ and define for every $i\in\{1,\dots,r\}$ the field
\[
E_i = E(\gamma_1^{\nicefrac{1}{m_1}},\dots,\gamma_i^{\nicefrac{1}{m_i}}).
\]
Every extension $E_i / E_{i-1}$ in the chain
\begin{equation}
E=E_0 \subseteq E_1 \subseteq E_2 \subseteq \cdots \subseteq E_r
\end{equation}
is cyclic of some order $k_i \mid m_i$. Note, that $\gamma_i \in E$, for all $i\in \{1,\dots,r\}$. 
By the classic theory of cyclic extensions (cf. \cite{La02}, Chapter VI), every intermediate field of $E_{i}=E_{i-1}(\gamma_i^{\nicefrac{1}{m_i}})/E_{i-1}$ is given by $E_{i-1}(\gamma_i^{\nicefrac{d}{m_i}})$ for some $d\in\mathbb{N}$. Hence, $E_{r-1}(\alpha)=E_{r-1}(\gamma_r^{\nicefrac{d_r}{m_r}})$. Assume, that the degree of $\alpha$ over $E_{r-1}$ is $n_r$ and that $\sigma$ is a generator of $\Gal(E_{r-1}(\alpha)/E_{r-1})$. Then, since $\alpha^n \in E \subseteq E_{r-1}$, we have 
\[
\sigma(\gamma_r^{\nicefrac{d_r}{m_r}})=\zeta_{n_r}\gamma_r^{\nicefrac{d_r}{m_r}} \quad \text{ and } \quad \sigma(\alpha)=\zeta_{n_r}^{l_r} \alpha,
\]
where $\zeta_{n_r}$ is a primitive $n_r$-th root of unity and $l_r \in \mathbb{N}$. We can conclude that $\sigma$, and hence $\Gal(E_{r-1}(\alpha)/E_{r-1})$, acts trivial on the element $\nicefrac{\alpha}{\gamma_r^{\nicefrac{d_r l_r}{m_r}}}$. It follows 
\[
\frac{\alpha}{\gamma_r^{\nicefrac{d_r l_r}{m_r}}} \in E_{r-1} \quad \text{ and } \quad \left(\frac{\alpha}{\gamma_r^{\nicefrac{d_r l_r}{m_r}}}\right)^{n m_r} \in E.
\]
Thus we can repeat this argument with $n$ replaced by $n m_r$, and $\alpha$ replaced by $\nicefrac{\alpha}{\gamma_r^{\nicefrac{d_r l_r}{m_r}}}$. Induction yields
\[
\frac{\alpha}{\prod_{i=1}^{r}\gamma_i^{\nicefrac{d_i l_i}{m_i}}} \in E_0^*
\]
which is equivalent to $\alpha \in E^* \Gamma_{\sat}$. Hence the residue class of $\alpha$ in the group $\nicefrac{L^*}{E^* \Gamma_{\sat}}$ is trivial, meaning that the group is torsion free. 
\end{proof}

\begin{proof}[Proof of Theorem \ref{thm:freeab}]
Let $L$ be a finite extension of $K(\Gamma_{\sat})$; say $L=K'(\Gamma_{\sat})$ for a finite extension $K'/K$. Set $F=K'(\gamma_{1},\ldots,\gamma_{r},\mu)$ and let $E/F$ be any finite extension such that $E\subseteq L$. By Proposition \ref{prop:firstclaim} we know that $\nicefrac{E^*}{E^*\cap \Gamma_{\divi}}$ is free abelian and we have just seen that $\nicefrac{L^*}{E^* \Gamma_{\divi}}$ is torsion free. Hence, the assumptions from Lemma \ref{freemodgamma} are met, which proves Theorem \ref{thm:freeab}.
\end{proof}

We conclude this paper by the following remark.

\begin{remark}
If we allow $\Gamma$ in Conjecture \ref{conj} (c) to be of infinite rank, then anything can happen. We still assume $\mcG=\mathbb{G}_m$. On the one hand, the statement of Conjecture \ref{conj} (c) is vacuously true if $\Gamma = \overline{\mathbb{Q}}^*$. On the other hand, if $\Gamma=\Gamma_{\sat}$ is the group of all algebraic units, then $\mathbb{Q}(\Gamma)=\overline{\mathbb{Q}}$, but obviously there are algebraic numbers of small height which are not in $\Gamma$.

A slightly less trivial example of a group $\Gamma$ of infinite degree that satisfies Conjecture \ref{conj} (c) is the following. Take $\Gamma=\Gamma_{\sat}$ as the group of all algebraic numbers such that all Galois conjugates lie on the unit circle. Then we have $\mathbb{Q}(\Gamma)=\mathbb{Q}^{\rm tr}(i)$, where $\mathbb{Q}^{tr}$ denotes the maximal totally real extension of $\mathbb{Q}$ (see for instance \cite[Theorem 1]{BL78}). Now Schinzel's result \cite[Theorem 2]{Sch73} implies that $h(\alpha)\geq \frac{1}{2}\log\left(\frac{1+\sqrt{5}}{2}\right)$ for all $\alpha \in \mathbb{Q}(\Gamma_{\sat})^*\setminus \Gamma_{\sat}$.
\end{remark}

\bigskip

{\it Acknowledgements:} I would like to thank Ga\"el R\'emond for providing the proof of Lemma \ref{Bogomolov}, which initiated this project, and for invaluable comments on an early version of this manuscript. Moreover, special thanks go to Arno Fehm for many interesting discussions on this topic.

\end{document}